\documentclass[draft]{siamltex}
\usepackage{amsfonts}
\usepackage{amsmath}
\usepackage{amssymb}
\usepackage{array}
\usepackage{color}
\usepackage{eucal}
\usepackage{framed}
\usepackage{graphicx}
\usepackage{mathrsfs}
\usepackage{relsize}
\usepackage{tikz}


\numberwithin{equation}{section}
\numberwithin{table}{section}
\numberwithin{figure}{section}
%

\newcommand{\Oh}{{\mathcal{T}_h}}
\newcommand{\Eh}{\mathcal{F}_h}

\newcommand{\dK}{{\partial K}}

\newcommand{\vt}{\boldsymbol{{\mathrm{v}}}}
\newcommand{\eg}{\boldsymbol{\mathsf{e}}}

\newcommand{\Vh}{\boldsymbol{V}_h}
\newcommand{\Qh}{\mathring{{Q}_h}}
\newcommand{\Wh}{W_h}
\newcommand{\Mh}{M_h}
\newcommand{\VV}{{\boldsymbol V}}

\newcommand{\GG}{{\mathcal{G}}}

\newcommand{\WW}{W}
\newcommand{\Vtilde}{\widetilde\VV}
\newcommand{\Wtilde}{\widetilde W}
\newcommand{\Vperp}{\widetilde\VV{}^\perp}
\newcommand{\Wperp}{\widetilde W^\perp}
\newcommand{\Vbd}{\gamma\Vperp}
\newcommand{\Wbd}{\gamma\Wperp}

\newcommand{\vv}{\bld{v}}
\newcommand{\ww}{w}

\newcommand{\uhat}{\widehat{u}_h}
\newcommand{\qhat}{\widehat{\boldsymbol{q}}_h}
\newcommand{\what}{\widehat{w}_h}
\newcommand{\uuhat}{\widehat{u}}
\newcommand{\wwhat}{\widehat{w}}
\newcommand{\muuhat}{\bld{\widehat{u}}}


\newcommand{\Piw}{\Pi_W}


\newcommand{\eu}{e_{u}}

\newcommand{\egg}{\mathrm{e}_{L}}
\newcommand{\euu}{\bld{e}_{u}}
\newcommand{\euuhat}{\bld{e}_{\widehat u}}
\newcommand{\epp}{{e}_{p}}
\newcommand{\dgg}{\mathrm{\delta}_{L}}
\newcommand{\duu}{\bld{\delta}_{u}}
\newcommand{\duuhat}{\bld{\delta}_{\widehat u}}
\newcommand{\dpp}{{\delta}_{p}}
\newcommand{\ddl}{{\mathrm{d}}_{L}}
\newcommand{\ddu}{{\mathrm{d}}_{u}}
\newcommand{\dduhat}{{\mathrm{d}}_{\widehat u}}
\newcommand{\ddp}{{\mathrm{d}}_{p}}
\newcommand{\dbeta}{{\mathrm{d}}_{{\beta}}}
\newcommand{\ddw}{{\mathrm{d}}_{w}}
\newcommand{\ddwhat}{{\mathrm{d}}_{\widehat w}}

\newcommand{\mg}{\mathrm{G}}
\newcommand{\mr}{\mathrm{r}}

\newcommand{\mwhat}{\widehat{\bld{v}}}
\newcommand{\mwwhat}{\widehat{\bld{v}}}
\newcommand{\muhat}{\widehat{\bld{u}}_h}

\newcommand{\mbeta}{{\bld P}_{\!h}}
\newcommand{\VVD}{{\VV}^{\mathrm{D}}(K)}
\newcommand{\WWD}{{\WW}^{\mathrm{D}}(K)}
\newcommand{\MD}{{M}^{\mathrm{D}}(\dK)}
\newcommand{\Pigg}{P_\GG}

\newcommand{\Pimm}{P_{\bld{M}}}
\newcommand{\Piww}{\Pi_{\VV}}
\newcommand{\Piq}{P_{Q}}

\newcommand{\OO}{\mathcal{O}_h}

\newcommand{\bint}[2]{( #1\,,\,#2 )_{\Oh}}
\newcommand{\bintEh}[2]{\langle #1\,,\,#2 \rangle_{\partial{\Oh}}}
\newcommand{\bintK}[2]{\langle #1\,,\,#2 \rangle_{\partial{K}}}

\newcommand{\inp}[2]{(#1\,, \, #2)_{\Oh}}


\newcommand{\tr}{\mathrm{tr}}

%

\newcommand{\n}{\boldsymbol{n}}
\newcommand{\bld}[1]{\boldsymbol{#1}}
\newcommand{\bR}{\mathbb R}
%
\newcommand{\ave}[1]{\,\{#1\}}

%
\newcommand{\pol}{\EuScript{P}}
\newcommand{\bpol}{\boldsymbol{\pol}}
\newcommand{\qol}{\EuScript{Q}}
\newcommand{\bqol}{\boldsymbol{\qol}}

%
\newcommand{\mm}{M}


\newcommand{\divs}{{\nabla\cdot}}
\newcommand{\grads}{{\nabla}}
\newcommand{\divv}{{\bld{\nabla\cdot}}}
\newcommand{\gradv}{{\bld{\nabla}}}

\newcommand{\lmax}{{\lambda_{\mathrm{c}}^\mathrm{max}}}

\newcommand{\ml}{\mathrm{L}}

\newcommand{\mzhat}{\widehat{\bld z}_h}
\newcommand{\vertiii}[1]{{\left\vert\kern-0.35ex\left\vert\kern-0.35ex\left\vert #1 
    \right\vert\kern-0.35ex\right\vert\kern-0.35ex\right\vert}}

\definecolor{blue}{rgb}{0,0,0}

\definecolor{ber}{rgb}{0,0,0}
\newcommand\ber[1]{\textcolor{ber}{#1}}

\title{
Discrete $H^1$-inequalities for spaces admitting  \mm-decompositions}

\author{
Bernardo Cockburn
        \thanks{School of Mathematics, University of Minnesota, Vincent Hall,
                Minneapolis, MN 55455, USA, email: {\tt cockburn@math.umn.edu}.
                Supported in part by the National Science Foundation
                (Grant DMS-1522657) and by the University of Minnesota
                Supercomputing Institute.}
\and
Guosheng Fu
        \thanks{
{        Division of Applied Mathematics, Brown University, 182 George St,
Providence RI 02912, USA, email: {\tt guosheng\_fu@brown.edu}.}}
\and
Weifeng Qiu
\thanks{Corresponding author. Department of Mathematics, City University of Hong Kong,
              83 Tat Chee Avenue, Kowloon, Hong Kong, China, email: {\tt weifeqiu@cityu.edu.hk}.
              The work of Weifeng Qiu was partially supported by a grant from the Research Grants 
              Council of the Hong Kong Special Administrative Region, China (Project No. CityU 11302014).}
}


\begin{document}

\maketitle

\begin{abstract}
We find new  discrete $H^{1}\!$- and Poincar\'e-Friedrichs inequalities by studying the invertibility of 
the DG approximation of the flux for local spaces admitting \mm-decompositions. We then show \ber{how} to use 
\ber{these inequalities} to define and analyze new, superconvergent HDG and mixed methods for which the stabilization 
function is defined in such a way that the approximations satisfy new $H^1$-stability results with which 
their error analysis is greatly simplified. We apply this approach  to define a wide class of
energy-bounded, superconvergent HDG and mixed methods 
for the incompressible Navier-Stokes equations defined on unstructured meshes using, in 2D, general polygonal elements and, in 3D, general, flat-faced tetrahedral, prismatic, pyramidal and hexahedral elements.
\end{abstract}

 \begin{keywords}
discontinuous Galerkin, hybridization,  stability, superconvergence, Navier-Stokes
\end{keywords}

\begin{AMS}
65N30, 65M60, 35L65
\end{AMS}

\thispagestyle{plain} \markboth{B. Cockburn, et.~al.}{
Discrete $H^1$-inequalitites and superconvergent HDG methods for Navier-Stokes}

\centerline{{\bf Version of \today}}

\section{Introduction}
\label{sec:intro}
\ber{In this paper, we obtain new discrete stability inequalities with which we 
carry out the first a priori error analysis of a
wide class of hybridizable discontinuous Galerkin (HDG) and mixed methods for the Navier-Stokes 
equations. The methods are defined on unstructured meshes using, in 2D, general polygonal 
elements and, in 3D, general, flat-faced tetrahedral, prismatic, pyramidal and hexahedral elements. They are a direct extension of the 
corresponding methods introduced for the Stokes flow in \cite{CockburnFuQiu16}. We prove optimal error estimates in all the unknowns as 
well as superconvergence results for the approximate velocity. By this, we mean that a new approximation for the velocity can be obtained in an 
elementwise manner which converges faster than the original velocity approximation.
}

\ber{The unifying feature of the above-mentioned class of methods is that they are defined by using 
the theory of \mm-decompositions. Using this theory, superconvergent HDG and mixed methods have 
been devised for diffusion \cite{CockburnFuSayas16,CockburnFuM2D,CockburnFuM3D},  for linear 
incompressible flow \cite{CockburnFuQiu16}, and for linear elasticity \cite{CockburnFuElas}. 
The theory of \mm-decompostions has also been used to obtain commuting de Rham sequences
\cite{CockburnFuCommuting}. Here, we use it to obtain the above-mentioned new discrete 
inequalities.}

\ber{To better explain our results, we introduce the HDG and mixed methods for steady-state diffusion
\[
 \mathrm{c}\boldsymbol{q} + \nabla u = 0, \quad
\nabla \cdot \boldsymbol{q} = f  \text{ in  }\Omega,
\quad
\text{ and }
\quad u = g \text{ on } \partial \Omega,
\]
and introduce the concept of an \mm-decomposition. We then describe the inequalities we want to obtain and, finally, describe how we are going to apply them to the analysis of HDG and mixed methods for the Navier-Stokes equations.}

\subsection*{HDG methods and \mm-decompositions} \ber{To define the HDG methods,
we follow \cite{CockburnGopalakrishnanLazarov09}. 
Thus, we take  the domain $\Omega\subset \bR^d$  to be a polygon if $d=2$ and 
a polyhedron if $d=3$. We triangulate it with a conforming  mesh $\Oh:=\{K\}$ made of  
shape-regular  polygonal/polyhedral elements $K$. 
We set $\partial \Oh:=\{\partial K: \, K\in \Oh\}$, and denote by $\Eh$ the set of faces $F$ 
of the elements $K \in \Oh$. We also denote by $\mathcal{F}(K)$ the set of faces $F$ of the element $K$.}

\ber{The HDG method seeks an approximation to $(u, \boldsymbol{q}, u|_{\Eh})$,
$(u_h, \boldsymbol{q}_h, \uhat)$, in the  {\color{blue}finite dimensional} space $W_h \times \Vh
 \times M_h$, where
\begin{alignat*}{3}
\boldsymbol{V}_h:=&\;\{\boldsymbol{v}\in\boldsymbol{L}^2(\Oh):&&\;\boldsymbol{v}|_K\in\boldsymbol{V}(K),&&\; K\in\Oh\},
\\
W_h:=&\;\{w\in{L}^2(\Oh):&&\;w|_K\in W(K),&&\; K\in\Oh\},
\\
M_h:=&\;\{\wwhat\in{L}^2(\Eh):&&\;\wwhat|_F\in M(F),&&\; F\in\Eh\},
\end{alignat*}
and determines it as the only solution of the following weak formulation:
\begin{subequations}
\label{HDG equations}
 \begin{alignat}{3}
\label{HDG equations a}
 \bint{\mathrm{c}\,\boldsymbol{q}_h}{\boldsymbol{v}}&-\bint{u_h}{\nabla \cdot \boldsymbol{v}}   && + \bintEh{\uhat}{ \boldsymbol{v} \cdot \boldsymbol{n}} && = 0, \\
\label{HDG equations b}
& -\bint{\boldsymbol{q}_h}{\nabla w} && + \bintEh{\qhat \cdot \n}{w} && = \bint{f}{w}, \\
&\qhat\cdot \n = \boldsymbol{q}_h\cdot \n &&+ {\alpha (u_h - \uhat)} &&\quad\text{ on }\quad \partial \Oh,
\\
& && \quad \, \langle{\qhat \cdot \n},{\wwhat}\rangle_{\partial\Oh\setminus\partial\Omega} && = 0,\\
& && \quad \, \langle \widehat{u}_h,{\wwhat}\rangle_{\partial\Omega} && =  \langle {u_D},{\wwhat}\rangle_{\partial\Omega},
\end{alignat}
for all $(w, \boldsymbol{v}, \wwhat) \in W_h \times \Vh \times M_h$.
Here we write 
\end{subequations}
$\inp{\eta}{\zeta} := \sum_{K \in \Oh} (\eta, \zeta)_K,$ 
where $(\eta,\zeta)_D$ denotes the integral of $\eta\zeta$ over the domain $D \subset \mathbb{R}^n$. We also write
$\bintEh{\eta}{\zeta}:= \sum_{K \in \Oh} \langle \eta \,,\,\zeta \rangle_{\partial K},$
where $\langle \eta \,,\,\zeta \rangle_{D}$ denotes the integral of $\eta \zeta$ over the domain $D \subset \mathbb{R}^{n-1}$. 
When vector-valued functions are involved, we use a similar notation.}

\ber{The different HDG methods are obtained by choosing 
the local spaces $\boldsymbol{V}(K)$, $W(K)$ and 
\[
M(\partial K):=\{\wwhat\in L^2(\partial K): \;\wwhat|_F\in M(F)\mbox{ for all }F\in \mathcal{F}(K)\},
\]
and the {\em linear local stabilization} function ${\alpha}$. It turns out \cite{CockburnFuSayas16} that if we can decompose $\boldsymbol{V}(K)\times W(K)$ 
in such a way that
\begin{alignat*}{1}
\boldsymbol{V}(K)&=\widetilde{\boldsymbol{V}}(K) \oplus \widetilde{\boldsymbol{V}}^\perp(K),
\\
{W}(K)&=\widetilde{W}(K) \oplus \widetilde{W}^\perp(K),
\\
M(\partial K)&=\widetilde{\boldsymbol{V}}^\perp(K)\cdot\boldsymbol{n}|_{\partial K} \oplus  \widetilde{W}^\perp(K)|_{\partial K},
\end{alignat*}
and a couple of simple inclusion properties,
that it is possible to find a stabilization function $\alpha$ such that the resulting HDG ($\alpha\neq0)$ 
or mixed method ($\alpha=0$) is superconvergent. Since this decomposition is essentially induced by the space $M(\partial K)$, it is called an $M(\partial K)$-decomposition of the space $\boldsymbol{V}(K)\times W(K)$. The explicit construction of those spaces for general polygonal 
elements was carried in \cite{CockburnFuM2D} (see the main examples in Table \ref{table:examples2D}) and for flat-faced general pyramids, prisms, and \ber{hexahedral} elements in 
\cite{CockburnFuM3D}. 
}
\subsection*{Invertibility of the discrete gradient operator}
In this paper, we study the invertibility properties of the mapping 
\begin{subequations}
\label{gc_hdg}
\begin{alignat}{3}
W(K)\times &M(\partial K) &&\longrightarrow && \;\bld{V}(K),
\\
 (u_h,&\,\uhat) &&\longmapsto && \;\bld{q}_h,
\end{alignat}
\vskip-.4truecm
\noindent where
\vskip-.6truecm
\begin{equation}
\label{gc_hdg_o}
(\mathrm{c}\,\bld{q}_h,\bld{v})_K=(u_h,\nabla\cdot\bld{v})_K-\langle\uhat, \bld{n}\cdot\bld{v}\rangle_{\partial K}\quad\forall\; \bld{v}\in \bld{V}(K),
\end{equation}
\end{subequations}
for spaces $\bld{V}(K)\times W(K)$ admitting an $M(\partial K)$-decomposition \cite{CockburnFuSayas16}. 
This mapping is a discrete version of the 
{\em constitutive} equation relating a vector-valued function $\bld q$ and a scalar-valued function $u$:
\[
\mathrm{c}\,\bld{q} = - \nabla u,
\]
where $\mathrm{c}$ and $\mathrm{c}^{-1}$ are bounded,  symmetric and uniformly positive definite matrix-valued functions, and
 has been used in, arguably, {\em all} DG and hybridized versions of mixed methods. \ber{In particular, it captures the first equation defining the HDG method for steady-state diffusion.}
We present new discrete versions of the estimates
\begin{alignat*}{3}
\|\nabla u\|^2_K&=\| \mathrm{c}\,\bld{q} \|^2 _K &&\quad\mbox{{\rm (trivial)}}, 
\\
 h_K^{-2}\| u-\overline{u}^K\|^2_K&\le C\, \| \mathrm{c}\,\bld{q} \|^2 _K&&\quad\mbox{{\rm (Poincar\'e-Friedrichs)}},
\end{alignat*}
where $\overline{\zeta}^{\,D}$ denotes the average of $\zeta$ \ber{on $D$} and $\|\cdot\|_D$ is the $L^2(D)$-norm.
They are expressed in terms of the (equivalent)
seminorms
\begin{subequations}
\label{localseminorms}
\begin{alignat}{3}
\label{localseminorm1}
| (u_h, \uhat)|^2_{1,K}:=& \|\nabla u_h\|^2_K+ h^{-1}_K\, \|u_h-\uhat\|^2_{\partial K}, 
\\
\label{localseminorm0}
| (u_h, \uhat)|^2_{\mbox{{\rm \tiny PF}},K}:=&\| u_h-\overline{\uhat}^{\;\partial K}\|^2_K+ h_K\, \|\uhat-\overline{\uhat}^{\;\partial K}\|^2_{\partial K},
\end{alignat}
\end{subequations}
and are, essentially, of the form
 \begin{alignat*}{2}
  |\,(u_h,\widehat{u}_h)\,|_{1,K}^2&\le C\,\left(\|\mathrm{c}\,\bld{q}_h\|_{K}^2 + 
  {h_K^{-1}\|P_{M_S}( u_h-\uhat)\|_{\dK}^2}\right) &&\quad\mbox{{\rm ($H^1$)},}
  \\
  h_K^{-2}\, |\,(u_h,\widehat{u}_h)\,|_{\mbox{{\rm \tiny PF}},K}^2&\le C\,\left( \|\mathrm{c}\,\bld{q}_h\|_{K}^2 + 
  h_K^{-1}\|{P_{M_S}(u_h-\uhat)}\|_{\dK}^2\right)&&\quad\mbox{{\rm (Poincar\'e-Friedrichs)},}
 \end{alignat*}
where $M_S= M_S(\dK)$, 
referred to as the {\it stabilization} space, is an easy-to-compute subspace of the {space $M(\dK)$} whose dimension is chosen to be {\em minimal}, 
and $P_{M_S}$ is its corresponding $L^2$-projection.
\ber{These inequalities, which are nothing but {\em stabilized} 
versions of $\inf$-$\sup$ conditions \cite{BoffiBrezziFortin13},
are the key ingredients for our analysis of HDG and mixed methods.
They generalize, to {\em all} spaces admitting \mm-decompositions, the $H^1$-inequality
obtained with {$M_{S}=\emptyset$} in \cite[Proposition 3.2]{EggerSchoberl10},
for the well known Raviart-Thomas spaces for simplexes, and, for smaller spaces, in \cite[Theorem 3.2]{ChungEngquist09} with
$M_S$ equal to the restriction of $M({\dK})$ onto an arbitrary face $F_K$
on which $\uhat$ was set to coincide with $u_h$.}


\subsection*{Application to the Navier-Stokes equations} We show how to do that, not in the relatively simple case of convection-diffusion equations, but in the 
more difficult case of the velocity gradient-velocity-pressure formulation of the 
steady-state incompressible Navier-Stokes equations in two- and three-space dimensions:
\begin{subequations}
\label{ns-equation}
\begin{align}
\label{nsa}
\ml=\nabla \bld u&\;\qquad \text{in}\quad\Omega,\\
\label{nsb}
-\nu\divv \ml+\divs(\bld u\otimes\bld{u})+\grads p=\bld{f}&\;\qquad \text{in}\quad\Omega,\\
\label{nsc}
\divs\bld{u}=0&\;\qquad \text{in}\quad \Omega,\\
\label{nsd}
\bld{u}=\bld{0}&\;\qquad \text{on}\quad \partial\Omega,\\
\label{nse}
\int_\Omega p=0&\;,
\end{align}
\end{subequations}
where $\ml$ is the velocity gradient, $\bld{u}$ is the velocity, $p$ is the pressure, $\nu$ is the kinematic viscosity 
and $\bld{f}\in {L}^2(\Omega)^d$
is the external body force.

\ber{Let us compare our results with those in \cite{CesmeliogluCockburnQiu17} where the only error analysis for HDG methods for the Navier-Stokes equations has been recently carried out. Let   $(\bld{u}_h,\muhat)$ be an approximation of the velocity
$(\bld{u}|_\Omega, \bld{u}|_{\Eh})$, where $\Eh$ denotes the set of faces of the mesh $\Oh$ of the domain $\Omega$, and 
let $\mathrm{L}_h$ be an approximation of 
the velocity gradient $\mathrm{L}|_\Omega$. In \cite{CesmeliogluCockburnQiu17}, the authors considered unstructured meshes made of simplexes, spaces of polynomials of degree $k$, and a stabilization function
$\alpha$ such that
\[
\langle\alpha(\bld{u}_h-\muhat),\bld{u}_h-\muhat\rangle_{\dK} = h^{-1}_K \|(\bld{u}_h-\muhat)\cdot\bld{n}\|_{\dK}^2.
\]
For this HDG method, optimal convergence order for all unknowns as well as the superconvergence of the velocity was obtained by using 
the novel upper bound
\[
\vertiii{(\bld{u}_h,\muhat)}_{1,\Oh}^2 \le C\,\sum_{K\in\Oh} ( \| \mathrm{L}_h\|^2_K + h^{-1}_K \|(\bld{u}_h-\muhat)\cdot\bld{n}\|_{\dK}^2),
\]
\ber{where the discrete $H^1$-norm $\vertiii{\cdot}_\Oh$ is given by
\[
\vertiii{(u_h,\uhat)}_{1,\Oh}:=(\sum_{K\in\Oh} |(u_h,\uhat)|^2_{1,K})^{1/2}.
\]}
In contrast, in this paper, \ber{stronger results} are obtained for a wide class of HDG and mixed methods defined on a variety of element shapes: general polgonal elements in 2D,  and tetrahedral, pyramidal, prismatic and hexahedral elements in 3D.} The local spaces defining these methods are those used for the corresponding methods for the Stokes equations of incompressible flow proposed in \cite{CockburnFuQiu16}; the stabilization function is not the same though. The spaces are constructed by using, as building blocks, the local spaces 
$\bld{V}(K)\times W(K)$ admitting an $M(\partial K)$-decomposition introduced in \cite{CockburnFuSayas16} for steady-state diffusion.

\ber{To obtain the new discrete inequalities, we proceed in two steps. First, we show that for all these methods, we have the
discrete $\bld{H}^1$-inequality
\[
\vertiii{(\bld{u}_h,\muhat)}_{1,\Oh}^2 \le C\,\sum_{K\in\Oh} ( \| \mathrm{L}_h\|^2_K + h^{-1}_K {\|P_{M_S}( \bld{u}_h-\muhat) \|_{\dK}^2}).
\]
We then show that if we {\em define} a stabilization function $\alpha$ such that
\[
\langle\alpha(\bld{u}_h-\muhat),\bld{u}_h-\muhat\rangle_{\dK} = {h^{-1}_K \|P_{M_S}(\bld{u}_h-\muhat) \|_{\dK}^2},
\]
we obtain new $\bld{H}^1$-boundedness results for the approximation, and new $\bld{H}^1$-stability inequalities, with which can easily obtain the above-mentioned convergence properties.}

\subsection*{Organization of the paper}The rest of the paper is organized as follows. In Section
\ref{sec:dh1}, {we present the general properties of the local spaces admitting \mm-decompositions and those of the the stabilization subspaces $M_S$;
specific choices of $M_S$ for the main spaces admitting \mm-decompositions are also provided. We then present and discuss our main result, namely, 
the new discrete inequalities of Theorem \ref{thm:dh1} which we prove} in Section  \ref{sec:proof-dh1}.
In Section \ref{sec:ns}, we define our HDG and mixed methods for the incompressible Navier-Stokes equations
and present their energy-boundedness and superconvergence properties; \ber{their proofs are provided} in Section \ref{sec:proof-ns}.
We end with some concluding remarks in Section \ref{sec:conclude}.

\section{The main result}
\label{sec:dh1}
In this Section, we present \ber{and discuss} our main result, namely, 
the discrete $\bld{H}^1$- and Poincar\'e-Friedrichs inequalities
of Theorem \ref{thm:dh1}\ber{; their proof is postponed to Section 3}. 
\ber{We first present the two ingredients 
needed to obtain these inequalities, namely, the spaces admitting \mm-decompositions and a 
stabilization subspace of the trace space $M(\partial K)$. 
}

\subsection{Notation}


\label{sec:notation}
Given a domain $D\subset \mathbb{R}^n$, we denote by
$\pol_k(D)$ and $\widetilde{\pol}_k(D)$ the space of polynomials of degree no
greater than $k$, and the space of homogeneous polynomials of degree $k$, respectively, defined on the domain $D$. 
When $D$ is a unit square with coordinates $(x,y)$, we denote by 
$\qol_k(D):= \pol_k(x)\otimes \pol_k(y)$ and $\widetilde{\qol}_k(D):=\widetilde\pol_k(x)\otimes \widetilde\pol_k(y)$ the space of 
tensor-product polynomials of degree no
greater than $k$, and the space of homogeneous tensor-product polynomials of degree $k$, respectively,
We use a
similar notation on tensor-product polynomial spaces on the unit cube. When $D:=B\otimes I$ is a unit prism having a triangular base $B$ with coordinates $(x,y)$ and a $z$-directional edge $I$, we denote by 
$\pol_{k|k}(D):= \pol_k(x,y)\otimes \pol_k(z)$ and $\widetilde{\pol}_{k|k}(D):=\widetilde\pol_k(x,y)\otimes \widetilde\pol_k(z)$ the space of 
tensor-product polynomials of degree no
greater than $k$, and the space of homogeneous tensor-product polynomials of degree $k$, respectively.
Vector-valued spaces are denoted with a superscript $d$ (the space dimension); for example,
$\pol_k(K)^d$ is the space of vectors whose entries lie in $\pol_k(K)$.

 We denote by  $\|\cdot\|_{W^{m,p}(D)}$ the standard $W^{m,p}$-Sobolev norm on the domain $D\subset \bR^d$. For the Hilbert space 
$H^m(D):=W^{m,2}(D)$,  we simply write  $\|\cdot\|_{m,D}$ instead of
$\|\cdot\|_{H^{m}(D)}$, and $\|\cdot\|_{D}$ instead of $\|\cdot\|_{0,D}$.
Similarly, when $p=\infty$, we write $\|\cdot\|_{m,\infty, D}$ instead of
$\|\cdot\|_{W^{m,\infty}(D)}$, and $\|\cdot\|_{\infty, D}$ instead of $\|\cdot\|_{0,\infty, D}$.
For a given a second-order tensor $\mathrm{c}$, we denote by 
$\|\cdot\|_{\mathrm{c},D}$ the $\mathrm{c}$-weighted $L^2$-norm on the domain $D$.

\ber{Finally, we denote by $\lmax(K)$ the $L^\infty(K)$-norm of the maximum eigenvalue of the tensor $\mathrm{c}$}.

\subsection{\!Examples of spaces $\bld{V}(K)\times W(K)$ admitting $M(\dK)$-decompositions}
An \mm-decomposition relates the trace of the normal component of the space of approximate fluxes $\VV(K)$
and 
the trace of the space of approximate scalars 
$\WW(K)$
with the space of approximate traces  
$
M(\partial K).
$  
To define it, 
we need to consider the combined trace operator
\begin{alignat*}{6}
\tr : & \VV(K)\times W(K) &\quad\longrightarrow\quad 
& L^2(\partial K)\\
& (\vv,w) &\quad\longmapsto\quad & 
(\vv\cdot\n +w)|_{\partial K}
\end{alignat*}

\begin{definition} [The \mm-decomposition] We say that $\VV(K)\times \WW(K)$ admits an \mm-decomposition when
\label{definition:m}
\begin{itemize}
\item[{\rm(a)}] $\tr (\VV(K)\times \WW(K))\subset M(\partial K)$,
\end{itemize}
and there exists a subspace $\Vtilde(K)\times\Wtilde(K)$ of $\VV(K)\times W(K)$ satisfying
\begin{itemize}
\item[{\rm(b)}] $\nabla \WW(K)\times\nabla\cdot\VV(K)\subset \Vtilde(K)\times\Wtilde(K),$
\item[{\rm(c)}] $\tr: \Vperp(K)\times \Wperp(K)\rightarrow M(\partial K)$ is an isomorphism.
\end{itemize}
Here $\Vperp(K)$ and $\Wperp(K)$ are the {$L^2(K)$-}orthogonal complements of 
$\Vtilde(K)$ in $\VV(K)$, and of $\Wtilde(K)$ in $\WW(K)$, respectively.
\end{definition}

Local spaces $\VV(K)\times \WW(K)$ admitting $M(\partial K)$-decompositions have been explicitly constructed in two-dimensions 
for general polygonal elements $K$ (see some examples in Table \ref{table:examples2D}) in 
\cite{CockburnFuM2D} and in three-dimensions for four types of polyhedral elements $K$, namely, tetrahedra, pyramids, prisms, and hexahedra
in \cite{CockburnFuM3D}. \ber{As pointed out in the Introduction, the} main interest of these spaces is that they generate superconvergent HDG and mixed methods, see \cite{CockburnFuSayas16}.

\begin{table}[ht]
\caption{Spaces $\VV(K)\times W(K)$ admitting an $M(\partial K)$-decomposition. {\rm \cite{CockburnFuSayas16}}}
 \centering
\begin{tabular}{l c c }
\hline
\noalign{\smallskip} 
         \multicolumn{1}{c}{$\boldsymbol{V}(K)$}         &     $W(K)$        &     method \\
\noalign{\smallskip}
\hline\hline
\noalign{\smallskip} 
\multicolumn{3}{c}{$M(\dK)=\pol_{k}(\partial K)$, $K$ is a square. 
 }\\
\hline
\noalign{\smallskip}
  $\bqol_k\oplus\bld{\mathrm{curl}}\;\mathrm{span}\{
x^{k+1} y , x\,y^{k+1}\}\oplus\;\mathrm{span}\{
\bld x\, x^{k} y^k\}$ & $\qol_{k}$&  ${\mathbf{TNT}_{[k]}}$ \cite{CockburnQiuShi12}\\
  $\bqol_{k}\oplus\bld{\mathrm{curl}}\;\mathrm{span}\{
x^{k+1} y , x\,y^{k+1}\}$ &  $\qol_{k}$ & ${\mathbf{HDG}^Q_{[k]}}$\cite{CockburnQiuShi12} \\
  $\bqol_k\oplus\bld{\mathrm{curl}}\;\mathrm{span}\{
x^{k+1} y , x\,y^{k+1}\}$& $\qol_{k}\setminus\{x^k\,y^k\}$ &  ${\mathbf{BDM}_{[k]}}$\\
\noalign{\smallskip}
\hline 
\noalign{\smallskip} 
\multicolumn{3}{c}{$M=\pol_{k}(\partial K)$, $K$ is a triangle. 
}\\
\hline
\noalign{\smallskip}
  $\bpol_k\oplus\boldsymbol{x}\,\widetilde{\pol}_k$ & $\pol_{k}$&  ${\mathbf{RT}_{k}}$ \cite{RaviartThomas77}\\
 $\bpol_{k}$ &  $\pol_{k}$ & $\boldsymbol{\mathrm{HDG}}_k$\cite{CockburnQiuShi12}\\
 $\bpol_k$& $\pol_{k-1}$ 
&  ${\mathbf{BDM}_{k}}$ \cite{BrezziDouglasMarini85}\\
\noalign{\smallskip}
\hline 
\noalign{\smallskip} 
\multicolumn{3}{c}{$M=\pol_{k}(\partial K)$, $K$ is a square.
 }\\
\hline
\noalign{\smallskip}
  $\bpol_k\oplus\bld{\mathrm{curl}}\;\mathrm{span}\{
x^{k+1} y , x\,y^{k+1}\}\oplus\;\boldsymbol{x}\,\widetilde{\pol}_k$ & $\pol_{k}$&  \cite{CockburnFuM2D}\\
 $\bpol_{k}\oplus\bld{\mathrm{curl}}\;\mathrm{span}\{
x^{k +1} y , x\,y^{k+1}\}$ &  $\pol_{k}$ &\cite{CockburnFuM2D}\\
 $\bpol_k\oplus\bld{\mathrm{curl}}\;\mathrm{span}\{
x^{k+1} y , x\,y^{k+1}\}$& $\pol_{k-1}$ 
&  ${\mathbf{BDM}_{[k]}}$ \cite{BrezziDouglasMarini85}\\
\noalign{\smallskip}
\hline 
\noalign{\smallskip} 
\multicolumn{3}{c}{$M=\pol_{k}(\partial K)$, $K$ is a quadrilateral.
}\\
\hline
\noalign{\smallskip}
  $\bpol_k\oplus_{i=1}^{ne}\bld{\mathrm{curl}} \,\mathrm{span}\{
\xi_4\,\lambda_3^k , \xi_4\,\lambda_4^k\}
  \oplus 
\boldsymbol{x}\,\widetilde{\pol}_k
$ & $\pol_{k}$ & \cite{CockburnFuM2D}\\
 $\bpol_{k}\oplus_{i=1}^{ne}\bld{\mathrm{curl}} \,\mathrm{span}\{
\xi_4\,\lambda_3^k , \xi_4\,\lambda_4^k\}\
 $ &  $\pol_{k}$& \cite{CockburnFuM2D}\\
 $\bpol_k\oplus_{i=1}^{ne}\bld{\mathrm{curl}} \,\mathrm{span}\{
\xi_4\,\lambda_3^k , \xi_4\,\lambda_4^k\}\
 $& $\pol_{k-1}$ & \cite{CockburnFuM2D}\\
\noalign{\smallskip}
\hline 
\end{tabular}
\label{table:examples2D}
\end{table}
Let us explain the notation used in the above table. By $\bld{\mathrm{curl}}\,p$  we mean the vector $(-p_y,p_x)$. 
\ber{By $\{\vt_i\}_{i=1}^{4}$ (and $\vt_5:=\vt_1$), we mean the four vertices of a quadrilateral; the vertices are ordered in a counter-clockwise manner. We} 
denote by $\eg_i$ the edge connecting the vertices $\vt_i$ and $\vt_{i+1}$.  Then, we set
\begin{align*}
 \xi_i := &\; \eta_{i-1}\frac{\lambda_{i-2}}{\lambda_{i-2}(\vt_i)}+
 \eta_{i}\frac{\lambda_{i+1}}{\lambda_{i+1}(\vt_{i})}
 \quad\mbox{ and }\quad
 \eta_i :=\Pi_{\underset{ j\not=i}{j=1}}^{4} \frac{\lambda_j}{\lambda_j+\lambda_i},
\end{align*}
where $\lambda_i$ \ber{is} the  linear function that 
vanishes on \ber{the} edge $\eg_i$ and reaches the maximum value $1$ in the closure of $K$. For details, see \cite{CockburnFuSayas16,CockburnFuM2D}.

\subsection{The stabilization subspace $M_S(\partial K)$}
\label{subsec:dh1-stabilization}

We also need to introduce  the stabilization space $M_S(\partial K)$.
\ber{This} is a subspace of $M(\dK)$ satisfying the following two conditions inspired from \cite[Proposition 3.2]{CockburnFuSayas16}:
\begin{subequations}
\label{condition-ms}
\begin{align} 
\dim M_S(\dK) = \dim \widetilde{W}^\perp(K) = &\;\dim \WW(K)-\dim \divs \VV(K),\\
 \|P_{M_S}(\cdot )\|_{\dK}\text{  is a norm} & \text{ on the space } 
 \widetilde{W}^\perp(K). 
\end{align}
\end{subequations}
Here, $P_{M_S}$ denotes the $L^2(\dK)-$projection into the space $M_S(\dK)$.
\ber{Examples of $M_S(\partial K)$ for various element shapes are collected in the following proposition, 
whose proof is given in Section 3.}



\begin{proposition}
\label{prop:ms}
Let the space $\VV(K)\times \WW(K)$ admit an $M(\dK)$-decomposition.
Then, conditions \eqref{condition-ms} are satisfied 
\begin{itemize}
 \item [\em{(1)}] If $\divs \VV(K)=\WW(K)$ and $M_S(\dK)=\emptyset$.
 \item [\em{(2)}] If $\divs \VV(K)=\pol_{k-1}(K), \ber{\WW(K)=\pol_k(K)}$
 and
 \vskip-.5truecm  
\[
  M_S(\dK):=\{\wwhat\in L^2(\dK):\;\;\wwhat|_{F^*}\in \pol_k(F^*), \;\;\wwhat|_{\dK\backslash F^*} = 0 \}
 \] 
 \vskip-.2truecm  
Here $F^*$ is a fixed  face of the element $K$ such that 
$K$ \ber{lies} in one side of the hyperplane containing $F^*$.
 \item [\em{(3)}] If $K$ is a square or cube,  
 $\divs \VV(K)=\divs \qol_k(K)^d, \ber{\WW(K)=\qol_k(K)}$
 and
\vskip-.5truecm  
 \[
  M_S(\dK):=\{\wwhat\in L^2(\dK):\;\;\wwhat|_{F^*}\in \widetilde{\qol}_k(F^*), 
  \;\;\wwhat|_{\dK\backslash F^*} = 0 \}.
 \] 
\vskip-.2truecm  
 Here $F^*$ is any fixed face of the square or cubic element $K$.
  \item [\em{(4)}] If $K$ is a prism with tensor product structure,
 $\divs \VV(K)=\divs \pol_{k|k}(K)^d, \linebreak \ber{\WW(K)=\pol_{k|k}(K)}$,
 and
\vskip-.5truecm  
 \[
  M_S(\dK):=\{\wwhat\in M:\;\;\wwhat|_{F^*}\in \widetilde{\pol}_k(F^*), 
  \;\;\wwhat|_{\dK\backslash F^*} = 0 \}.
 \] 
\vskip-.2truecm  
 Here $F^*$ is a  \ber{triangular base} of the prism $K$.
\end{itemize}
\end{proposition}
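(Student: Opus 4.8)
The plan is to exploit the structure of the minimal $M(\dK)$-decomposition, for which $\Wtilde(K)=\divs\VV(K)$, so that $\Wperp(K)$ is precisely the $L^2(K)$-orthogonal complement of $\divs\VV(K)$ in $\WW(K)$. Both requirements in \eqref{condition-ms} then become statements about the linear map $w\mapsto P_{M_S}(w)$ sending $\Wperp(K)$ into $M_S(\dK)$: the first is a dimension count, while the second (that $\|P_{M_S}(\cdot)\|_{\dK}$ be a norm) holds if and only if this map is injective, i.e. $w\in\Wperp(K)$ and $P_{M_S}(w)=0$ force $w=0$. Since in each case the two dimensions will be shown to coincide, injectivity is equivalent to the norm property, and it is this injectivity that I would establish. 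Case (1) is immediate, since $\divs\VV(K)=\WW(K)$ gives $\Wperp(K)=\{0\}$ and both conditions are vacuous.

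For case (2) I would first count dimensions: $\dim\Wperp(K)=\dim\pol_k(K)-\dim\pol_{k-1}(K)=\dim\widetilde\pol_k(K)=\binom{k+d-1}{d-1}$, which equals $\dim\pol_k(F^*)=\dim M_S(\dK)$ because $F^*$ is a $(d-1)$-dimensional face. For injectivity, suppose $w\in\Wperp(K)$ with $P_{M_S}(w)=0$. Since $M_S(\dK)$ carries all of $\pol_k(F^*)$ on $F^*$ and $w|_{F^*}\in\pol_k(F^*)$, the vanishing of the projection forces $w|_{F^*}=0$. Writing $\ell$ for the linear function vanishing on the hyperplane containing $F^*$, the hypothesis that $K$ lies on one side of that hyperplane means $\ell$ has a single sign on $K$; moreover $w$ vanishes on the whole hyperplane, so $w=\ell\,r$ with $r\in\pol_{k-1}(K)$. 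Testing the relation $w\perp\pol_{k-1}(K)$ against $r$ yields $\int_K\ell\,r^2=0$, and the single sign of $\ell$ forces $r=0$, hence $w=0$. The geometric one-sidedness of $F^*$ is the crucial ingredient here.

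For the tensor-product cases (3) and (4), the strategy is to describe $\Wperp(K)$ explicitly via a graded $L^2$-orthogonal basis built from (shifted) Legendre polynomials $L_m$ in each coordinate. For the cube, $\divs\qol_k(K)^d$ is the span of all tensor monomials of $\qol_k(K)$ except the top one, so $\Wperp(K)$ is one-dimensional, spanned by the product of top-degree factors $L_k(x)L_k(y)L_k(z)$, matching $\dim\widetilde\qol_k(F^*)=1$. For the prism an inclusion-exclusion count on $\divs\pol_{k|k}(K)^d=\bigl(\pol_{k-1}(x,y)\otimes\pol_k(z)\bigr)+\bigl(\pol_k(x,y)\otimes\pol_{k-1}(z)\bigr)$ gives $\dim\Wperp(K)=k+1=\dim\widetilde\pol_k(F^*)$, and the same reasoning identifies $\Wperp(K)=\{\psi(x,y)\,L_k(z):\ \psi\in\pol_k(x,y),\ \psi\perp\pol_{k-1}(x,y)\text{ in }L^2(F^*)\}$. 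In both cases $F^*$ sits at an endpoint $z_0$ of the extrusion interval, where $L_k(z_0)\neq0$, so restriction to $F^*$ is injective on $\Wperp(K)$ up to the nonzero factor $L_k(z_0)$, and it remains to check injectivity of $P_{M_S}$ on the surviving $(x,y)$-factor. For the prism this is clean: $P_{M_S}(w)=0$ gives $\psi\perp\widetilde\pol_k(F^*)$ in $L^2(F^*)$, while $\psi\perp\pol_{k-1}(F^*)$ by membership in $\Wperp(K)$; since $\pol_{k-1}(F^*)+\widetilde\pol_k(F^*)=\pol_k(F^*)\ni\psi$, we get $\psi\perp\psi$, hence $\psi=0$. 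For the cube the one-dimensional situation reduces to $\langle L_k(x)L_k(y),\,x^ky^k\rangle_{F^*}\neq0$, which follows from the non-orthogonality of $L_k$ to $x^k$ (the square being entirely analogous).

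The main obstacle I anticipate is the explicit identification of $\Wperp(K)$ in the tensor-product cases, together with the accompanying dimension counts for the divergence image; once $\Wperp(K)$ is pinned down, the nonvanishing of $L_k$ at the endpoints of the extrusion interval and the orthogonal-complement bookkeeping make the norm property fall out directly. The sign argument in case (2), which is exactly where the geometric hypothesis on $F^*$ is used, is the other delicate point.
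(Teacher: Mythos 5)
Your argument for case (2) is exactly the paper's: the dimension count $\dim\pol_k(F^*)=\dim\pol_k(K)-\dim\pol_{k-1}(K)$, followed by writing $w=\lambda\widetilde w$ with $\lambda$ the linear function vanishing on $F^*$ and using the one-sidedness of $K$ to conclude from $(\lambda\widetilde w,\widetilde w)_K=0$ that $w=0$. The paper dismisses cases (1), (3) and (4) as ``similar and simpler,'' so your explicit Legendre-tensor-product identification of $\widetilde W^\perp(K)$ for the cube and prism, together with the endpoint nonvanishing of $L_k$ and the splitting $\pol_k=\pol_{k-1}+\widetilde\pol_k$, is a correct and welcome filling-in of details the paper omits.
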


\subsection{Discrete $H^{1}$- and Poincar\'e-Friedrichs inequalities}
Our main result is the following. 

\begin{theorem}[Local, discrete $H^1$- and Poincar\'e-Friedrichs inequalities]
 \label{thm:dh1}
 Let $K$ be any element of the mesh $\mathcal{T}_h$. 
 Consider the mapping $(u_h,\,\uhat)\in W(K)\times M(\partial K) \longmapsto \;\bld{q}_h\in \bld{V}(K)
$ given by  \eqref{gc_hdg}. Then, if $\bld{V}(K)\times W(K)$ 
admits an $M(\partial K)$-decomposition, and
 \[
 \Theta_K:=\left( \lmax(K) \,\|\bld{q}_h\|_{\mathrm{c},K}^2 + 
  h_K^{-1}\|P_{M_S}(u_h-\uhat)\|_\dK^2\right),
 \]
where 
$M_{S}(\partial K)$ is any subspace of  $M(\partial K)$
 satisfying conditions \eqref{condition-ms}, we have the inequalities
 \begin{alignat*}{2}
  |\,(u_h,\widehat{u}_h)\,|_{1,K}^2&\le C\,\Theta_K
  &&\quad\mbox{{\rm ($H^1$)},}
  \\
  h_K^{-2}\, |\,(u_h,\widehat{u}_h)\,|_{\mbox{{\rm \tiny PF}},K}^2&\le C\,\Theta_K&&\quad\mbox{{\rm (Poincar\'e-Friedrichs)},}
 \end{alignat*}
where
the constant $C$ only depends on the finite element spaces $\VV(K)$, $\WW(K)$ and $M_S(\dK)$, 
and on the shape-regularity properties of the element  $K$.
\end{theorem}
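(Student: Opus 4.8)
The plan is to eliminate the tensor $\mathrm{c}$, reduce the whole statement to a single estimate on the \emph{full} trace jump $u_h-\uhat$, and then prove that estimate by exploiting an orthogonality hidden in the $M(\dK)$-decomposition.

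First I would rewrite \eqref{gc_hdg_o}. Integration by parts gives, for every $\boldsymbol{v}\in\VV(K)$,
\[
(\mathrm{c}\,\boldsymbol{q}_h,\boldsymbol{v})_K=-(\grads u_h,\boldsymbol{v})_K+\langle u_h-\uhat,\boldsymbol{v}\cdot\n\rangle_{\dK}.
\]
Setting $\boldsymbol{r}_h:=\Pivv(\mathrm{c}\,\boldsymbol{q}_h)\in\VV(K)$, the $L^2(K)$-projection, the left-hand side becomes $(\boldsymbol{r}_h,\boldsymbol{v})_K$, and since $\mathrm{c}$ is symmetric positive definite one has $\|\boldsymbol{r}_h\|_K^2\le\|\mathrm{c}\,\boldsymbol{q}_h\|_K^2\le\lmax(K)\,\|\boldsymbol{q}_h\|_{\mathrm{c},K}^2\le\Theta_K$. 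This removes $\mathrm{c}$: it suffices to work with the clean identity $(\boldsymbol{r}_h,\boldsymbol{v})_K=-(\grads u_h,\boldsymbol{v})_K+\langle u_h-\uhat,\boldsymbol{v}\cdot\n\rangle_{\dK}$ and with the reduced quantity $\|\boldsymbol{r}_h\|_K^2+h_K^{-1}\|P_{M_S}(u_h-\uhat)\|_{\dK}^2$. Testing this identity with $\boldsymbol{v}=\grads u_h$, which lies in $\VV(K)$ because $\grads\WW(K)\subset\Vtilde(K)$ by Definition \ref{definition:m}(b), together with a trace-inverse inequality, yields
\[
\|\grads u_h\|_K\le\|\boldsymbol{r}_h\|_K+C\,h_K^{-1/2}\|u_h-\uhat\|_{\dK}.
\]
Hence the whole theorem reduces to the jump estimate
\[
h_K^{-1/2}\|u_h-\uhat\|_{\dK}\le C\big(\|\boldsymbol{r}_h\|_K+h_K^{-1/2}\|P_{M_S}(u_h-\uhat)\|_{\dK}\big).\qquad(\star)
\]

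The heart of the proof, and the step I expect to be hardest, is $(\star)$: the left side is the \emph{entire} jump, while the right side sees it only through $P_{M_S}$. To prove it I would first note $u_h-\uhat\in M(\dK)$ by Definition \ref{definition:m}(a), so the isomorphism of Definition \ref{definition:m}(c) provides a unique $(\boldsymbol{\delta},\delta)\in\Vperp(K)\times\Wperp(K)$ with $u_h-\uhat=(\boldsymbol{\delta}\cdot\n+\delta)|_{\dK}$. The decisive observation is that this splitting is $L^2(\dK)$-\emph{orthogonal}: for $\boldsymbol{v}\in\Vperp(K)$ and $w\in\Wperp(K)$, Green's formula and Definition \ref{definition:m}(b) give
\[
\langle\boldsymbol{v}\cdot\n,w\rangle_{\dK}=(\divs\boldsymbol{v},w)_K+(\boldsymbol{v},\grads w)_K=0,
\]
since $\divs\boldsymbol{v}\in\Wtilde(K)\perp w$ and $\grads w\in\Vtilde(K)\perp\boldsymbol{v}$. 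Therefore $\|u_h-\uhat\|_{\dK}^2=\|\boldsymbol{\delta}\cdot\n\|_{\dK}^2+\|\delta\|_{\dK}^2$, and it remains to bound the two pieces separately. Testing the clean identity with $\boldsymbol{v}=\boldsymbol{\delta}\in\Vperp(K)$ kills the volume term (because $\grads u_h\in\Vtilde(K)$) and leaves $\|\boldsymbol{\delta}\cdot\n\|_{\dK}^2=(\boldsymbol{r}_h,\boldsymbol{\delta})_K$; since $\boldsymbol{v}\mapsto\|\boldsymbol{v}\cdot\n\|_{\dK}$ is a norm on $\Vperp(K)$ (again by Definition \ref{definition:m}(c)), finite-dimensional norm equivalence on the reference element gives $\|\boldsymbol{\delta}\|_K\le C\|\boldsymbol{\delta}\cdot\n\|_{\dK}$, whence $\|\boldsymbol{\delta}\cdot\n\|_{\dK}\le C\|\boldsymbol{r}_h\|_K$. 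For the second piece, $P_{M_S}\delta=P_{M_S}(u_h-\uhat)-P_{M_S}(\boldsymbol{\delta}\cdot\n)$ gives $\|P_{M_S}\delta\|_{\dK}\le\|P_{M_S}(u_h-\uhat)\|_{\dK}+C\|\boldsymbol{r}_h\|_K$, and because $\|P_{M_S}(\cdot)\|_{\dK}$ is a norm on $\Wperp(K)$ by \eqref{condition-ms}, norm equivalence yields $\|\delta\|_{\dK}\le C\|P_{M_S}\delta\|_{\dK}$. Combining the two bounds proves $(\star)$.

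With $(\star)$ in hand the $H^1$-inequality is immediate from the displayed bound on $\|\grads u_h\|_K$. The Poincar\'e--Friedrichs inequality follows by a standard argument on the reference element: writing $u_h-\overline{\uhat}^{\,\dK}=(u_h-\overline{u_h}^{K})+(\overline{u_h}^{K}-\overline{\uhat}^{\,\dK})$, and $\uhat-\overline{\uhat}^{\,\dK}$ in terms of the jump, one controls the two $\mathrm{PF}$-terms by $\|\grads u_h\|_K$ and $h_K^{-1/2}\|u_h-\uhat\|_{\dK}$ through the classical Poincar\'e inequality and trace inequalities, both of which are already dominated by $\Theta_K$. All constants originate from finite-dimensional norm equivalences on the fixed reference spaces $\VV(K)$, $\WW(K)$, $M_S(\dK)$, so the final $C$ depends only on these spaces and on the shape-regularity of $K$; undoing the affine scaling restores the stated $h_K$-weights.
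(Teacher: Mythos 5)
Your proof is correct and, for the $H^1$-inequality, follows essentially the same route as the paper: integrate by parts, test with $\grads u_h$ using property (b), then control the jump by splitting it according to the $\mm$-decomposition, bounding the $\gamma\Vperp$-component by testing with the corresponding element of $\Vperp(K)$ and the $\gamma\Wperp$-component through the norm property of $P_{M_S}$ in \eqref{condition-ms}. Two genuine differences are worth recording. First, you prove explicitly that the splitting $M(\dK)=\gamma\Vperp\oplus\gamma\Wperp$ is $L^2(\dK)$-orthogonal, via Green's formula together with property (b); the paper works with the projections $P_{\gamma\Vperp}$, $P_{\gamma\Wperp}$ and uses the identity $\langle u_h-\uhat,\vv\cdot\n\rangle_{\dK}=\|P_{\gamma\Vperp}(u_h-\uhat)\|^2_{\dK}$ without comment, and your orthogonality computation is precisely what justifies that step --- a useful clarification. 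Second, for the Poincar\'e--Friedrichs inequality you deduce the result from the equivalence of $|(\cdot,\cdot)|_{1,K}$ and $h_K^{-1}|(\cdot,\cdot)|_{{\rm PF},K}$; the paper states that this equivalence makes one proof sufficient, but deliberately gives an independent second proof based on the characterization \eqref{Mdec} of mean-zero traces as normal traces of divergence-free fields in $\VV(K)$, so as to exhibit a different property of $\mm$-decompositions. Your shortcut is legitimate, though the equivalence argument (Poincar\'e plus trace inequalities on $K$, uniform under shape regularity) should be written out rather than only invoked. The one presentational caveat is that finite-dimensional norm equivalences such as $\|\bld{\delta}\|_K\le C\,\|\bld{\delta}\cdot\n\|_{\dK}$ must carry the factor $h_K^{1/2}$ after mapping from the reference element, as in the paper's constants $C_{\Vperp}$ and $C_{\grads\WW}$; you acknowledge this at the end, and with those weights inserted every step closes.
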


\

A detailed proof of this result is given in the next section. Here,  let us briefly discuss it:

\

(1). First, note that it is not very difficult to obtain these inequalities if the projection operator
$P_{M_S}$ is replaced by the identity. Indeed, if we {\em only} assume that $\nabla W(K)\subset \bld{V}(K)$,  we can take $\bld{v}:=\nabla u_h$ in the equation defining $\bld{q}_h$, \eqref{gc_hdg_o}, to immediately obtain 
\[
\|\nabla u_h\|^2_K \le \|\mathrm{c}\,\bld{q}_h\|^2 _K + C\,h^{-1}_K\,\|u_h-\uhat\|^2_{\partial K}.
\]
The wanted inequality now easily follows.
\ber{However, such choice might degrade the accuracy of the HDG method, as is typical of DG methods, see, for example, \cite{CastilloCockburnPerugiaSchoetzau00}. To avoid this, we must
chose a {\it minimal} space $M_S$ such that the inequalities in Theorem 2.3 still hold.}

(2). The inequalities of the above result are nothing but {\em stabilized} versions of $\inf$-$\sup$ conditions for the bilinear form defining $\bld{q}_h$, see \eqref{gc_hdg}, since 
\[
\| \mathrm{c}\,\bld{q}_h\|_K \ge \sup_{\bld{v}\in\bld{V}(K)\setminus\{\bld{0}\}}\frac{(u_h,\nabla\cdot\bld{v})_K-\langle\uhat, \bld{v}\cdot\bld{n}\rangle_{\partial K}}{\| \bld{v}\|_K},
\]
see \cite[Section 6.3]{BoffiBrezziFortin13}. For this reason, the subspace $M_S(\dK)$ is called a {\em stabilization} subspace. 

\

(3). Let us argue that the dimension of the stabilization space $M_S(\dK)$ is actually minimal. It is obvious {that the influence of $u_h$ on $\bld{q}_h$ is only through its
$L^2$-projection} into $\nabla\cdot\bld{V}(K)$. 
As a consequence, the part of $u_h$ lying on the 
$L^2(K)$-orthogonal complement of $\nabla\cdot\bld{V}(K)$ in $W(K)$ {\em cannot} be controled by the size of $\bld{q}_h$. Since the dimension of such space is 
$\dim \WW(K)-\dim \divs \VV(K) $ and this number, by the first of conditions \eqref{condition-ms}, is equal to $\dim M_S(K)$, we see that the dimension 
of $M_S(\dK)$ cannot be smaller for the inequalities under consideration to hold.

\
 
(4).
\ber{The above $H^1$-inequality has been explicitly obtained in the literature for two cases \cite{EggerSchoberl10,ChungEngquist09}. 
The first \cite{EggerSchoberl10} is the 
case of the Raviart-Thomas elements on a simplex in which the spaces, using our notation, 
\begin{align*}
\VV(K)=&\pol_k(K)^d+ \bld{x}\,\pol_k(K), \quad\WW(K):=\pol_k(K),\\ 
M(\partial K):=&\{\mu\in L^2(\dK):\;\mu|_F\in\pol_k(F) \;\forall\; F\in\mathcal{F}(K)\},
M_S(\dK)=\emptyset, 
\end{align*}
see \cite[Proposition 3.2]{EggerSchoberl10};
the second \cite{ChungEngquist09} is the case for the 
staggered DG method in which the spaces (defined on a simplex) are given as follows:
\begin{align*}
\VV(K)=&\pol_k(K)^d, \quad\WW(K):=\pol_k(K),\\ 
M(\partial K):=&\{\mu\in L^2(\dK):\;\mu|_F\in\pol_k(F) \;\forall\; F\in\mathcal{F}(K)\},\\
M_S(\dK):=&\{\mu\in M(\partial K): \mu=0 \mbox{ on }\partial K\setminus F_K\},
\end{align*}
where $F_K$ is a single face of the simplex $K$; see \cite[Theorem 3.2]{ChungEngquist09}.
}

\

(5). 
\ber{Given data $\widehat u_h$ and $f$, 
let $(\bld{q}_h,u_h)\in \bld{V}(K)\times W(K)$ be the solution to the local problem
\eqref{HDG equations a}--\eqref{HDG equations b}, 
with the space $\bld{V}(K)\times W(K)$ admitting an $M(\dK)$-decomposition.
The following inequalities were obtained in \cite[Theorem 4.3]{CockburnFuSayas16}
\begin{alignat*}{1}
\|\nabla u_h\|^2_K &\le C\,\left( \lmax(K) \,\|\bld{q}_h\|_{\mathrm{c},K}^2 +\|P_{\widetilde{W}^\perp} f\|^2_K\right),
\\
h_K^{-1}\|u_h-\uhat\|^2_{\dK} &\le C\,\left( \lmax(K) \,\|\bld{q}_h\|_{\mathrm{c},K}^2 +\|P_{\widetilde{W}^\perp} f\|^2_K\right).
\end{alignat*}
Our result replaces the quantity $\|P_{\widetilde{W}^\perp} f\|^2_K$ on the above right hand side with 
\[h_K^{-1}\|P_{M_S}(u_h-\uhat)\|_\dK^2.\]
It is this small change that significantly facilitates the analysis of HDG schemes for the incompressible 
Navier-Stokes equation considered in this paper. 
}

\


\ber{(6). The dependence of the constant $C$ in the estimates on the local spaces 
$\boldsymbol{V}(K), W(K), \text{ and } M_S(\partial K),$ 
and on the shape regularity of the element $K$ remains to be studied. It is reasonable to believe that $C$ can be uniformly bounded by  
a function of the the maximum degree of the polynomial functions belonging to the local spaces and by a suitable measure of the element shape-regularity.
}

\subsection{Choosing the stabilization function $\alpha$ to get $H^1$-stability} 
We end this Section by illustrating the fact that the stabilization subspace $M_S(\dK)$ 
can be actually used, when defining HDG methods,  to obtain what we could call the {\em minimal} 
stabilization function $\alpha$ needed to achieve a new $H^1$-stability result. Let us do that in the framework of  HDG approximations for steady-state diffusion problems. 

So, if $(\bld{q}_h,u_h)\in \bld{V}(K)\times W(K)$ 
is the solution of the local problem \eqref{HDG equations a}--\eqref{HDG equations b},
  we have the discrete energy identity
\[
\mathsf{E}_K(\bld{q}_h;u_h,\uhat)
=(f, u_h)_K -\langle \qhat\cdot\bld{n}, \uhat\rangle_{\dK},
\]
where
\vskip-1truecm
\[
\mathsf{E}_K(\bld{q}_h;u_h,\uhat):=
(\mathrm{c}\,\bld{q}_{h}, \bld{q}_h)_K + \langle \alpha(u_h - \uhat), u_h - \uhat\rangle_{\dK},
\]
is the {\em energy} associated to the element $K$. We immediately \ber{see} that
\[
\| \mathrm{c}\,\bld{q}_h\|^2 _K + h^{-1}_K\,\|P_{M_S}(u_h-\uhat)\|^2_{\partial K}\le C\, \mathsf{E}_K(\bld{q}_h;u_h,\uhat),
\]
if we pick the stabilization function $\alpha$ as
\begin{equation}
\label{alpha}
\alpha(\widehat{\omega}):={h_K^{-1}}\,P_{M_S}(\widehat{\omega})\;\;\forall\;\widehat{\omega}\in L^2(\dK),
\end{equation}
case in which we  say that this stabilization function $\alpha$ is {\em minimal}. 
Thus, by establishing this link between the HDG stabilization function $\alpha$ and 
the stabilization subspace $M_S(\dK)$, an estimate of the energy immediate implies 
an estimate on the discrete seminorms under consideration, that is,
\[
{\max\{ h_K^{-2}|(u_h,\uhat)|^2_{\mbox{{\rm \tiny PF}},K},  |(u_h,\uhat)|^2_{1,K}\}}\le C \,\mathsf{E}_K(\bld{q}_h;u_h,\uhat).
\]
\ber{Now consider the full HDG scheme \eqref{HDG equations} for diffusion, 
we easily obtain discrete $H^1$-stability result of the approximation with respect to the data $f$ by summing 
the above inequality over all elements:
\[
\vertiii{(u_h,\uhat)}_{1,\Oh}^2
=\sum_{K\in\Oh} |(u_h,\uhat)|^2_{1,K}
\le C  \sum_{K\in\Oh} \mathsf{E}_K(\bld{q}_h;u_h,\uhat)
= C\, (f,u_h)_\Oh.
\]
This stability result can be \ber{similarly} obtained for the HDG method for 
the convection-difussion equation in which convection is treated with the standard {\it upwinding} technique.
We use this approach in Section 4 to deal with the HDG and mixed methods for the Navier-Stokes equations.
}

 
\section{Proofs of the results of Section 2}
\label{sec:proof-dh1}
In this Section, we give a proof of the  properties of the stabilization spaces $M_S(\dK)$,
and then a proof of the discrete $H^1$- and the discrete Poincar\'e-Friedrichs inequalities.

\subsection{\ber{Proof of Proposition \ref{prop:ms}}}
\ber{Let us first prove Proposition \ref {prop:ms} on the properties of the stabilization spaces $M_S(\dK)$.}
We just prove the second case since the proofs for the other three are similar and simpler.

 For this case, we have $\divs \VV=\pol_{k-1}(K)$, $W=\pol_k(K)$ and
 \[
  M_S=\{\wwhat\in L^2(\dK):\;
  \wwhat|_{F^*} \in \pol_k(F^*), \wwhat|_{\dK\backslash F^*}=0\},
 \]
where $F^*$ is a face of the element $K$ such that $K$ \ber{lies} on one side of the hyperplane 
containing $F^*$.  Hence, we have 
\begin{align*}
 \dim M_S &\;= \dim \pol_k(F^*)
 = \dim \pol_k(K) - \dim \pol_{k-1}(K)\\
&\; = \dim W-\dim \divs \VV
= \dim W-\dim \Wtilde
 = \dim \gamma(\Wperp).
\end{align*}
This proves the first condition for $M_S$.

To prove the second condition, we only need to show that for any function 
$\wwhat \in \gamma(\Wperp)$, $P_{M_S}(\wwhat)=0$ implies $\wwhat=0$. 
Now, let $\wwhat$ be a function in $\gamma(\Wperp)$ such that $P_{M_S}(\wwhat)=0$.
By the definition of $\gamma(\Wperp)$, there exists a function $w\in \Wperp$ such that 
$\gamma(w) = \wwhat$. Hence, $P_{M_S}(\gamma(w))=0$. By the definition of $M_S$ and $W$, 
we have $w = \lambda \widetilde{w}$ where $\lambda\in \pol_1(K)$ is the linear function vanishing on $F^*$ and 
$\widetilde{w}\in\pol_{k-1}(K)= \Wtilde$. By $L^2$-orthogonality of the spaces $\Wtilde$ and $\Wperp$, we have 
\[
 (w,\widetilde{w})_K =  (\lambda \widetilde{w},\widetilde{w})_K=0,
\]
which immediately implies $w = 0$ by the assumption on the face $F^*$. This completes the proof 
of Proposition \ref{prop:ms}.

\subsection{\ber{Proof of Theorem \ref{thm:dh1}}}
\label{subsec:thm:dh1}
\ber{Here, we prove the inequalities of Theorem \ref{thm:dh1}.
Although it is enough to prove only one since the seminorms $|(\cdot,\cdot)|_{1,K}$ and $|(\cdot,\cdot)|_{\mbox{{\rm \tiny PF}},K}$ are equivalent,} we provide a different proof for each of them, as they put in evidence different \ber{properties} of the
\mm-decompositions.

\subsubsection{Proof of the first inequality} To prove the first inequality, it is convenient to first carry out a simple integration-by-parts in the equation defining $\bld{q}_h$, \eqref{gc_hdg_o}:
 \begin{align*}
\ber{(\mathrm{c}\,\bld{q}_h, \vv)_K =- (\grads u_h, \vv)_K + \bintK{u_h-\uhat}{\vv\cdot\n}}\quad\forall\;\vv\in \VV(K).  
 \end{align*}
By Property (b) of an \mm-decomposition, we can now set $\vv:= \grads u_h$
to get
\begin{align*}
 \|\grads u_h\|_{K}^2 = &\;
 -(\mathrm{c}\,\bld{q}_h, \vv)_K + \bintK{u_h-\uhat}{\vv\cdot\n},
\end{align*}
and conclude that
\begin{alignat*}{1}
\|\grads u_h\|_{K}&
 \le (\lmax)^{1/2} \|\bld{q}_h\|_{\mathrm{c}, K}+ 
 C_{\grads \WW}\,{\color{blue}h_K^{-1/2}}\,\|u_h-\uhat\|_{\dK},
\\
  C_{\grads\WW} := &\;
\sup_{\vv\in \grads\WW(K)\backslash\{0\}} \frac{h_K^{1/2}\|\vv\cdot\n\|_\dK}{\|\vv\|_K}.
\end{alignat*}

Let us now estimate the jump $u_h-\uhat\in M(\dK)$.
By \ber{Property} (c) of an \mm-decomposition, we can write
that $u_h-\uhat =  P_{\Wbd}(u_h-\uhat) + P_{\Vbd}(u_h-\uhat)$. Now, by the second of conditions \eqref{condition-ms}, there is a constant $C_{M_S}$ such that
\begin{alignat*}{2}
 \|P_{\gamma\Wperp}(u_h-\uhat)\|_{\dK}
 \le &\;C_{M_S} \|P_{M_S}\,\big(P_{\gamma\Wperp}(u_h-\uhat)\big)\|_{\dK}\\
 \le &\;C_{M_S}\big( \|P_{M_S}\,(u_h-\uhat)\|_{\dK}
 +\|P_{M_S}\,\big(P_{\gamma\Vperp}(u_h-\uhat)\big)\|_{\dK}
 \big)\\
 \le &\;C_{M_S}\big( \|P_{M_S}\,(u_h-\uhat)\|_{\dK}
 +\|P_{\gamma\Vperp}(u_h-\uhat)\|_{\dK}
 \big).
\end{alignat*}
It remains to estimate $\|P_{\Vbd}(u_h-\uhat)\|_{\dK}$.
Taking $\vv\in \Vperp(K)$ such that $\vv\cdot\n|_{\dK} = P_{\gamma\Vperp}(u_h-\uhat)$
in the definition of $\bld{q}_h$, and using the fact that $\grads u_h\in \Vtilde(K)$ is $L^2$-orthogonal to 
$\vv\in \Vperp(K)$, we get 
\begin{alignat*}{2}
\|P_{\gamma\Vperp}(u_h-\uhat)\|_{\dK}^2 = &\;
(\mathrm{c}\,\bld{q}_h, \vv)_K, 
\end{alignat*}
and conclude that
\begin{align*}
& \|P_{\gamma\Vperp}(u_h-\uhat)\|_{\dK}\le
 C_{\Vperp}(\lmax)^{1/2}\,{\color{blue}h_K^{1/2}}\,\|\bld{q}_h\|_{\mathrm{c}, K},
\\
& C_{\Vperp} :=
\sup_{\vv\in \Vperp(K)\backslash\{0\}} \frac{\|\vv\|_K}{h_K^{1/2}\|\vv\cdot\n\|_\dK}.
\end{align*}
The first inequality now easily follows.

\subsubsection{Proof of the second inequality} To prove this inequality, 
it is convenient \ber{to rewrite} the equation defining 
$\bld{q}_h$, \eqref{gc_hdg_o},  as follows:
 \begin{align*}
(\mathrm{c}\,\bld{q}_h, \vv)_K - (u_h-\overline{\uhat}^{\,\dK}, \nabla\cdot \vv)_K + \bintK{\uhat-\overline{\uhat}^{\,\dK}}{\vv\cdot\n} = 0\quad\forall\;\vv\in \VV(K). 
 \end{align*}
By \cite[Theorem 2.4]{CockburnFuSayas16}, since $\bld{V}(K)\times W(K)$ admits an $M(\dK)$ decomposition, we have the identity
 \begin{alignat}{1}
 \label{Mdec}
 \{\mu\in M(\dK):\;\langle \mu, 1\rangle_{\dK} \textcolor{black}{  = 0 } \}
 =\{\vv\cdot\bld{n}|_{\dK}:\;
 \vv\in \bld{V}(K), \;\nabla\cdot\vv=0\}.
 \end{alignat}
 This means that there is a function $\vv\in \bld{V}(K)$ such that $\vv\cdot\bld{n}|_{\dK}= \uhat-\overline{\uhat}^{\,\dK}$ and $\nabla\cdot\vv=0$. Using this function as test function, we get
\[
\|\uhat-\overline{\uhat}^{\,\dK}\|^2_{\dK}=-(\mathrm{c}\,\bld{q}_h, \vv)_K,
\]
and so,
\begin{alignat*}{1}
&\|\uhat-\overline{\uhat}^{\,\dK}\|_{\dK}\le (\lmax(K))^{1/2}\|\bld{q}_h\|_{c,K} C_{\bld{V}\cdot \bld{n}}\, h^{1/2}_K,
\\
&
C_{\bld{V}\cdot \bld{n}}:=\sup_{\footnotesize\begin{matrix}\mu\in M(\dK)\\
                                                     \langle\mu,1\rangle_{\dK}=0
                               \end{matrix}}
                     \inf_{\footnotesize\begin{matrix}\vv\in \bld{V}(K)\setminus\{0\}\\
                                                    \nabla\cdot\vv=0\\
                                                    \vv\cdot\bld{n}=\mu
                             \end{matrix}}
                             \frac{\|\vv\|_K}{h_K^{1/2}\|\vv\cdot\n\|_\dK}.
\end{alignat*}

\textcolor{black}{
It remains to estimate $\|u_h-\overline{\uhat}^{\,\dK}\|_K$. We define \ber{a} test function 
$\vv\in \bld{V}(K)$ such that $\nabla\cdot\vv=P_{\nabla\cdot\bld{V}} (u_h-\overline{\uhat}^{\,\dK})${, which we can assume to be different from zero}. 
Obviously, we get 
\begin{align*}
\| P_{\nabla\cdot \bld{V}}(u_h - \overline{\uhat}^{\,\dK})\|^2_{K} = & (\mathrm{c}\,\bld{q}_h, \vv)_K 
+ \bintK{\uhat-\overline{\uhat}^{\,\dK}}{\vv\cdot\n} , 
\end{align*}
and so, 
{\begin{align*}
&\| P_{\nabla\cdot \bld{V}}(u_h - \overline{\uhat}^{\,\dK})\|_{K} 
\leq  \big(\Vert \mathrm{c} \bld{q}_{h} \Vert_{K} 
 + h^{-1/2}_K\,\|\uhat-\overline{\uhat}^{\,\dK}\|_{\dK}\big)\,C_{\nabla\cdot\bld{V}}\,h_{K},
 \\
&C_{\nabla\cdot\bld{V}}:=\sup_{g\in \nabla\cdot\bld{V}(K)\setminus\{0\}}
                     \inf_{\footnotesize\begin{matrix}\vv\in \bld{V}(K)\\
                                                    \nabla\cdot\vv=g
                                                  \end{matrix}}
                             \frac{(\|\vv\|_K+h^{1/2}_K\|\vv\cdot\bld{n}\|_{\dK})}{h_K \|\nabla\cdot\vv\|_K}.
\end{align*}}
}
{Finally, let us} estimate $(\mathrm{Id}-P_{\nabla\cdot \bld{V}})(u_h - \overline{\uhat}^{\,\dK})$. Since this function coincides with $P_{\widetilde{W}^\perp} (u_h - \overline{\uhat}^{\,\dK})$ because $\widetilde{W}(K)=\nabla\cdot\bld{V}(K)$, we get
\begin{alignat*}{2}
\| P_{\widetilde{W}^\perp} (u_h - \overline{\uhat}^{\,\dK})\|_K
\le &\; C_K\, h^{1/2}_K\,\|P_{\Wbd} (u_h - \overline{\uhat}^{\,\dK})\|_{\dK}
\\
\le &\; C_M\,C_K\, h^{1/2}_K\,\|P_{M_S} (u_h - \overline{\uhat}^{\,\dK})\|_{\dK}
\\
\le &\; C_M\,C_K\, h^{1/2}_K\,(
\|P_{M_S} (u_h - \uhat)\|_{\dK}
+\|P_{M_S} (\uhat - \overline{\uhat}^{\,\dK})\|_{\dK}),
\\
\le &\; C_M\,C_K\, h^{1/2}_K\,(
\|P_{M_S} (u_h - \uhat)\|_{\dK}
+\|\uhat - \overline{\uhat}^{\,\dK}\|_{\dK}),
\end{alignat*}
and the estimate follows.
This completes the proof of Theorem \ref{thm:dh1}.

\section{Application: HDG methods for the Navier-Stokes equations}
\label{sec:ns}
In this Section, we introduce and analyze
new HDG and mixed 
methods for the steady-state incompressible Navier-Stokes equation with 
velocity gradient-velocity-pressure formulation described by equations \eqref{ns-equation}.

We proceed as follows. After defining the methods, we show that their approximate solution exists, is unique and satisfies an
energy-boundedness property under \ber{a} {\em smallness} assumption \ber{on} the data. We then provide results on the convergence properties. 

Some of the errors involving the velocities are measured in
the norms and seminorms defined as follows.
For any $(\vv,\widehat{\vv}) \in \bld{V}_h\times \bld{M}_h$, we set
\begin{alignat*}{1}
\vertiii{(\vv,\widehat{\vv})}_{\ell,\Oh}^2 := \sum_{i=1}^d \sum_{K\in\Oh} |(\vv_i,\widehat{\vv}_i)|^2_{\ell,K}
\qquad\mbox{ for }\ell=0,1,{\mbox{\rm \tiny PF}},
\end{alignat*}
where $|(\cdot,\cdot)|_{1,K}$ and $|(\cdot,\cdot)|_{\mbox{\rm \tiny PF},K}$ are defined by \eqref{localseminorms},
and
\begin{alignat*}{1}
|(\vv_i,\widehat{\vv}_i)|^2_{0,K}:= \|\vv_i\|^2_K+ h_K(\|\widehat{\vv}_i\|^2_{\dK}+\|\vv_i-\widehat{\vv}_i\|^2_{\dK}).
\end{alignat*}

\subsection{Definition of the methods}
\label{subsec:ns-hdg}
\subsubsection{The general form of the methods}
The HDG and mixed methods for \eqref{ns-equation} seek an approximation to $(\ml, \bld{u}, p, \bld{u}|_{\Eh})$,
$(\ml_h, \boldsymbol{u}_h, p_h,  \muhat)$, in the space 
$\GG_h\times \Vh \times \Qh \times \bld {M}_h(0)$ given by
\begin{subequations}
\label{ns-hdg-space}
\begin{alignat}{3}
\GG_h:=&\;\{\mg\in{L}^2(\Oh)^{d\times d}:&&\;\mg|_K\in\GG(K),&&\; K\in\Oh\},
\\
\label{ns-hdg-space-v}
\Vh:=&\;\{\vv\in{L}^2(\Oh)^d:&&\;\vv|_K\in \VV(K),&&\; K\in\Oh\},
\\
\label{ns-hdg-space-q}
\Qh:=&\;\{\ber{q\in{L}^2(\Oh)}:&&\;q|_K\in Q(K),&&\; K\in\Oh, (q,1)_\Omega = 0\},
\\
\label{ns-hdg-space-m}
\bld M_h:=&\;\{\mwwhat\in{L}^2(\Eh)^d:&&\;\mwwhat|_F\in \bld M(F),&&\; F\in\Eh\},
\\
\bld M_h(0):=&\;\{\mwwhat \in \bld M_h:&&\;\mwwhat|_{\partial\Omega}=0\}.
\end{alignat}
\end{subequations}
where the local spaces $\GG(K), \VV(K), Q(K),$ and $\bld M(F)$ are suitably defined finite dimensional spaces, 
and determine it as the only solution of the following weak formulation:
\begin{subequations}
\label{ns-HDG-equations}
 \begin{alignat}{3}
 \label{ns-HDG-equations-1}
 \bint{\nu\,\ml_h}{\mg}+\bint{\bld u_h}{\nu\,\divv \mg}   - \bintEh{\muhat}{ \nu\,\mg\, \n} & = 0, \\
 \label{ns-HDG-equations-2}
 \bint{\nu\,\ml_h}{\gradv \vv} \ber{+ \bintEh{-\nu\,\ml_h\,\n
+\alpha_v(\bld u_h-\muhat) }{\vv - \mwhat}\;\;\hspace{0.6cm}} \nonumber&\\
- \bint{p_h}{\divs \vv} + \bintEh{p_h\,\n}{\vv - \mwhat}\;\;\hspace{3.3cm} \nonumber&\\
 -\bint{\bld u_h\otimes \bld{\beta}}{\gradv \vv} + \bintEh{
\ber{( \bld{\beta}
\cdot\n)}\,\muhat
+\alpha_c(\bld u_h-\muhat)
}{\vv - \mwhat} &= \bint{\bld f}{\vv},\\
 \label{ns-HDG-equations-3}
 -\bint{\bld u_h}{\grads q}+ \bintEh{\muhat\cdot\n}{ q} & \ber{= 0,}
\end{alignat}
\end{subequations}
for all $(\mg, \vv, q, \mwhat) \in  \GG_h\times \Vh \times \Qh\times \bld M_h(0)$, where
\[\alpha_v: L^2(\dK)^d\longrightarrow L^2(\dK)^d\;\;\; \text{ and} \;\;\;
\alpha_c: L^2(\dK)^d\longrightarrow L^2(\dK)^d
\] 
are the {\em local stabilization operators} related to the viscous and  convective parts, respectively.
To complete the definition of the method, we have to define the local spaces, the divergence-free post-processed 
velocity $\bld{\beta}$, and the stabilization operators. We do this next.

\subsubsection{{The local spaces}}
\label{subsec:ns-space}
\ber{The finite element spaces are the ones used in \cite{CockburnFuQiu16} for Stokes flow.}
\ber{Let the space}
$\VVD\times \WWD\times \MD$ be such that
$\VVD\times \WWD$ admits an $\MD$-decomposition, see Definition \ref{definition:m}. Moreover, we assume that
\begin{align}
 \label{require-w}
\WWD \text{ is a polynomial space \ber{such that}}
\sum_{i=1}^d \partial_{i}\WWD\subset \WWD.
\end{align}

Then, the local spaces $\GG(K)$, $ \VV(K)$, and $Q(K)$,
and the local trace space $\bld{M}(\dK)$ 
are defined as follows:
\begin{subequations}
\label{ns-local-spaces}
\begin{alignat}{2}
\label{ns-local-spaces-a}
 \GG_i(K)\times \VV_i(K)\times \bld{M}_i(K)
 := &\;\VVD\times \WWD\times \MD &&\quad i=1,\cdots, d,\\
\label{ns-local-spaces-b}
Q(K) := &\;\WWD.
\end{alignat}
\end{subequations}

\subsubsection{The post-processed velocity $\bld{\beta}$}
On the element $K$, the post-processed velocity $\bld{\beta}$ is taken in a \ber{finite dimentional} space ${\VV}^*(K)$ satisfying the conditions
\begin{subequations}
\label{div-space}
\begin{align}
\label{div-space-1}
&\VVD\subset {\VV}^*(K), \divs{\VV}^*(K) = \WWD,\\
\label{div-space-2}
&{\VV}^*(K)\times \WWD \text{ admits an $\MD$-decomposition}.
 \end{align}
\end{subequations}
This vector-valued  space can be easily constructed from $\VVD$, as shown in 
\cite[Proposition 5.3]{CockburnFuSayas16}. 

On the element $K$,
the post-processed velocity $\bld{\beta}:= \mbeta(\bld{u}_h,\muhat)\in \VV^*_h $
is defined as the function in $\VV^*(K)$ such that 
\begin{subequations}
\label{post-process-defn}
 \begin{alignat}{2}  
 \label{post-process-defn-1}
 (\mbeta(\bld{u}_h,\muhat), \vv)_K = &\;
 (\bld{u}_h, \vv)_K&&\quad \forall\; \vv\in \widetilde{\VV^*}(K),\\
 \label{post-process-defn-2}
 \bintK{\mbeta(\bld{u}_h,\muhat)\cdot\n}{\widehat{v}}
 = &\;
  \bintK{\muhat\cdot\n}{\widehat{v}}
  &&\quad\forall\; \widehat{v}\in \MD.
 \end{alignat}
\end{subequations}
Here $\widetilde{\VV^*}(K):= \grads \WWD\oplus\{\vv\in\VV^*(K):\;
\divs \vv=0,\;\vv\cdot\n|_{\dK} = 0\}$.

\ber{ We gather the main properties of this mapping in the next result which we prove in Appendix \ref{sec:A}.}
%
\begin{proposition}
 \label{lemma:div-proj}
 Let $(\bld v,\mwhat)\in \Vh\times \bld M_h$. Then, for any element $K\in\Oh$,  we have
 \begin{alignat*}{2}
  \vertiii{(\mbeta{(\bld v,\mwhat),\ave{\mbeta{(\bld v,\mwhat)}})}}_{\ell,K}\le &\;
  C\,\vertiii{(\bld v,\mwhat)}_{\ell,K}&&\quad 
  \text{ for } \ell = 0,1,\\
 \|{\mbeta{(\bld v,\mwhat)}}\|_{\infty,K}\le &\;
  C\,\vertiii{(\bld v,\mwhat)}_{\infty,K},
 \end{alignat*}
 with a constant $C$ depending only on the space $\VV(K)\times \bld M(\dK)$ and the shape regularity of the element 
 $K$.
 Moreover, if $(\bld u_h,\muhat)\in \Vh\times \bld M_h(0)$ satisfies the weak incompressibility condition given by equation 
 \eqref{ns-HDG-equations-3}, 
 then 
  \begin{alignat*}{2}
  \mbeta(\bld u_h,\muhat)\in H(\mathrm{div}, \Omega) \text{ and }
\divs  \mbeta(\bld u_h,\muhat)=0.
 \end{alignat*}
\end{proposition}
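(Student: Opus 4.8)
The plan is to separate the statement into the stability bounds and the conservation properties, after first settling well-posedness of the local reconstruction. Since $\VV^*(K)\times\WWD$ admits an $\MD$-decomposition and $\divs\VV^*(K)=\WWD$ by \eqref{div-space}, property (b) of Definition~\ref{definition:m} gives $\divs\VV^*(K)\subset\widetilde W(K)\subseteq\WWD$, forcing $\widetilde W(K)=\WWD$ and hence $\widetilde W^{\perp}(K)=\{0\}$. Property (c) then reduces to the statement that the normal-trace map $\vv\mapsto\vv\cdot\n|_{\dK}$ is an isomorphism from the $L^2(K)$-orthogonal complement of $\widetilde{\VV^*}(K)$ in $\VV^*(K)$ onto $\MD$. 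Writing $\mbeta(\bld v,\mwhat)=P_{\widetilde{\VV^*}}\bld v+\bld r$ with $\bld r$ in that complement, I would read \eqref{post-process-defn-1} as fixing the first summand to be the $L^2(K)$-projection of $\bld v$ onto $\widetilde{\VV^*}(K)$, and \eqref{post-process-defn-2} as fixing $\bld r\cdot\n=P_{\MD}\big((\mwhat-P_{\widetilde{\VV^*}}\bld v)\cdot\n\big)$; the trace isomorphism then determines $\bld r$ uniquely, so $\mbeta$ is well defined and linear in $(\bld v,\mwhat)$.

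For the stability bounds I would exploit this explicit characterization. The projection yields $\|P_{\widetilde{\VV^*}}\bld v\|_K\le\|\bld v\|_K$ for free, while $\bld r$ is controlled through its normal trace by $\|\mwhat-\bld v\|_{\dK}$ plus trace terms of $\bld v$. Converting these into the seminorms $\vertiii{\cdot}_{\ell,K}$ for $\ell=0,1$ and into the $L^\infty$-bound is then a matter of finite-dimensional norm equivalences made quantitative in $h_K$: mapping $K$ to a reference element turns $\mbeta$ into a fixed bounded linear operator between finite-dimensional spaces, and the shape-regularity of $K$ controls the Jacobians, so the only genuine work is tracking the correct powers of $h_K$ (in particular the scaling $\|\bld r\|_K\sim h_K^{1/2}\|\bld r\cdot\n\|_{\dK}$ coming from the trace isomorphism) so that the estimates land exactly in the stated seminorms. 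This is essentially the boundedness of the reconstruction already used in \cite[Proposition 5.3]{CockburnFuSayas16}.

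The conservation properties are the clean, algebraic core. Since $\grads\WWD\subset\widetilde{\VV^*}(K)$ and $\WWD|_{\dK}\subset\MD$ (property (a)), for any $q\in\WWD$ an integration by parts together with \eqref{post-process-defn} gives
\[
(\divs\mbeta(\bld u_h,\muhat),q)_K=-(\bld u_h,\grads q)_K+\bintK{\muhat\cdot\n}{q}.
\]
Summing over $K$ and invoking the weak incompressibility \eqref{ns-HDG-equations-3} shows that $\divs\mbeta(\bld u_h,\muhat)$ is $L^2(\Omega)$-orthogonal to every mean-zero element of $\prod_K\WWD$; as $\divs\mbeta(\bld u_h,\muhat)|_K\in\WWD$ and contains the constants, it must be a global constant, whose integral over $\Omega$ equals $\sum_K\bintK{\muhat\cdot\n}{1}=0$ because $\muhat$ is single-valued on interior faces and vanishes on $\dO$. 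Hence $\divs\mbeta(\bld u_h,\muhat)=0$ elementwise. For $H(\mathrm{div})$-conformity I would restrict \eqref{post-process-defn-2} to a single interior face $F=\partial K^+\cap\partial K^-$; since $\mbeta\cdot\n|_F\in M(F)$ by property (a), this yields $\mbeta^{\pm}\cdot\n^{\pm}=P_{M(F)}(\muhat\cdot\n^{\pm})$, and the single-valuedness of $\muhat$ together with $\n^+=-\n^-$ makes the two normal traces cancel, so $\mbeta(\bld u_h,\muhat)\in H(\mathrm{div},\Omega)$.

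I expect the main obstacle to be purely a bookkeeping one: checking that the finite-dimensional norm equivalences scale with the right powers of $h_K$ so that the reconstruction is bounded \emph{simultaneously} in the $\ell=0$, $\ell=1$ and $L^\infty$ discrete seminorms, with a constant depending only on the local spaces and on shape-regularity. The conservation statement, by contrast, follows directly from the two defining relations \eqref{post-process-defn} and the structural inclusions of the $\MD$-decomposition.
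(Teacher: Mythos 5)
Your proposal is correct and follows essentially the same route as the paper: well-posedness via the trace isomorphism $\gamma\big((\widetilde{\VV^*}(K))^\perp\big)=\MD$ (which the paper simply cites from \cite[Proposition 6.4]{CockburnFuSayas16}; note that your reduction of this to property (c) is slightly imprecise, since the space $\widetilde{\VV^*}(K)$ of \eqref{post-process-defn} need not coincide with the $\Vtilde$ of the \mm-decomposition), the stability bounds by scaling and finite-dimensional norm equivalence, $H(\mathrm{div})$-conformity from \eqref{post-process-defn-2} together with $\mbeta(\bld u_h,\muhat)\cdot\n|_F\in M(F)$, and the divergence-free property by integrating by parts against $q\in\WWD$ using $\grads\WWD\subset\widetilde{\VV^*}(K)$ and $\divs\VV^*(K)=\WWD$. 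The only (immaterial) difference is in handling the zero-mean constraint on the pressure test space: the paper first extends \eqref{ns-HDG-equations-3} to constant test functions and then localizes element by element, whereas you keep the constraint, deduce that $\divs\mbeta(\bld u_h,\muhat)$ is a global constant, and then kill that constant using the single-valuedness of $\muhat$ and its vanishing on $\dO$.
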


\subsubsection{The stabilization operators}
\label{subsec:ns-stabilization}
\ber{For the convective stabilization operator, we take the choice leading to the classic upwinding:}
\begin{subequations}
\begin{alignat}{2}
\label{stabilization-n}
 \alpha_c (\mwwhat) := \max\{\bld{\beta}\cdot \n, 0\}\,\mwwhat \quad \forall\; \mwwhat \in L^2(\dK)^d,
\end{alignat}
where $\bld{\beta}=\mbeta(\bld{u}_h,\muhat)$ is given in \eqref{post-process-defn}.
For the viscous stabilization operator, we take 
\begin{alignat}{2}
\label{stabilization-s}
\alpha_v (\mwwhat) := \frac{\nu}{h_K} P_{\bld{M_S}}(\mwwhat) \quad \forall\; \mwwhat \in L^2(\dK)^d,
\end{alignat}
\end{subequations}
where $P_{\bld{M_S}}$ is the projection onto the space $\bld{M_S}(\dK)$, whose
$i$-th component is taken to be $M_S^\mathrm{D}(\dK)$.

\subsection{Existence, uniqueness and boundedness}
\label{subsec:ns-discrete-h1}
Now that we have completed the definition of the methods, we must ask ourselves if the approximate solutions actually exist and are unique.
The next result show that this is the case under a standard {\em smallness} condition on the data. 

\begin{theorem}[Existence, uniqueness and boundedness]
\label{thm:discrete-h1-ns}
If $\nu^{-2}\|\boldsymbol{f}\|_\Omega$ is small enough, then the HDG method \eqref{ns-HDG-equations} has a unique solution.
Furthermore, for the component 
$(\bld{u}_h,\muhat)\in \Vh\times \bld{M}_h(0)$ of the approximate solution 
the following stability bound is satisfied:
\begin{eqnarray*}
\vertiii{(\boldsymbol{u}_h,\widehat{\boldsymbol{u}}_h)}_{1,\Oh}\le C\nu^{-1}\,\|\boldsymbol{f}\|_\Omega,
\end{eqnarray*}
for a constant $C$ that depends only on the finite element 
spaces, the shape-regularity of the mesh, and the domain.
\end{theorem}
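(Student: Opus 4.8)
The plan is to establish existence via a fixed-point argument (Brouwer) built on the linearized problem, obtain the energy bound first, and derive uniqueness from the smallness condition. The key tool throughout will be the discrete $H^1$-stability machinery developed in Section 2, which lets me control $\vertiii{(\bld{u}_h,\muhat)}_{1,\Oh}$ by the energy, together with the divergence-free property of $\bld{\beta}=\mbeta(\bld{u}_h,\muhat)$ from Proposition \ref{lemma:div-proj}.

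\textbf{Energy identity and boundedness.} First I would test the scheme \eqref{ns-HDG-equations} with $(\mg,\vv,q,\mwhat)=(\ml_h,\bld{u}_h,p_h,\muhat)$. The viscous terms combine, as in the diffusion case discussed after Theorem \ref{thm:dh1}, to give the nonnegative energy $\nu\|\ml_h\|^2_\Oh + \sum_K\langle\alpha_v(\bld u_h-\muhat),\bld u_h-\muhat\rangle_\dK$, where by \eqref{stabilization-s} the stabilization contributes exactly $\nu h_K^{-1}\|P_{\bld M_S}(\bld u_h-\muhat)\|_\dK^2$. The crucial point is that the \emph{convective} terms must vanish (or be nonnegative). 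Because $\bld\beta$ is exactly divergence-free and in $H(\mathrm{div},\Omega)$ by Proposition \ref{lemma:div-proj}, and because the upwind choice \eqref{stabilization-n} for $\alpha_c$ yields a nonnegative quadratic form $\sum_K\langle\max\{\bld\beta\cdot\n,0\}(\bld u_h-\muhat),\bld u_h-\muhat\rangle_\dK$, the convection contributes a nonnegative amount after the usual skew-symmetry/integration-by-parts cancellation. Thus the full energy identity reduces to $\mathsf{E}\le (\bld f,\bld u_h)_\Omega$. Applying Theorem \ref{thm:dh1} summed over elements (exactly as in the displayed $H^1$-stability computation at the end of Section 2, now with $\alpha_v$ playing the role of the minimal $\alpha$) gives $\nu\,\vertiii{(\bld u_h,\muhat)}^2_{1,\Oh}\le C\,\mathsf{E}\le C\,(\bld f,\bld u_h)_\Omega$. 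A discrete Poincaré--Friedrichs inequality (the second inequality of Theorem \ref{thm:dh1}, summed) bounds $\|\bld u_h\|_\Omega\le C\,\vertiii{(\bld u_h,\muhat)}_{1,\Oh}$, and Cauchy--Schwarz then yields the claimed bound $\vertiii{(\bld u_h,\muhat)}_{1,\Oh}\le C\nu^{-1}\|\bld f\|_\Omega$.

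\textbf{Existence.} Next I would set up a map $\Phi$ on the finite-dimensional set $B:=\{(\bld w,\mwhat):\vertiii{(\bld w,\mwhat)}_{1,\Oh}\le C\nu^{-1}\|\bld f\|_\Omega\}$: given $(\bld w,\mwhat)$, freeze $\bld\beta=\mbeta(\bld w,\mwhat)$ in the convective terms and solve the resulting \emph{linear} Oseen-type HDG system for $(\ml_h,\bld u_h,p_h,\muhat)$, setting $\Phi(\bld w,\mwhat):=(\bld u_h,\muhat)$. Well-posedness of the linear problem follows from the same energy estimate (the frozen $\bld\beta$ is still divergence-free, so the linear operator is coercive on the appropriate subspace), and the energy bound shows $\Phi$ maps $B$ into itself. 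Continuity of $\Phi$ is inherited from the smooth dependence of the linear solve and of $\mbeta$ on the data. Brouwer's fixed-point theorem then produces a fixed point, which is a solution of the full nonlinear scheme.

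\textbf{Uniqueness.} Finally, suppose $(\ml_h^1,\bld u_h^1,\dots)$ and $(\ml_h^2,\bld u_h^2,\dots)$ are two solutions, both satisfying the energy bound. I would subtract the two sets of equations and test with the difference. The linear (viscous, pressure, incompressibility) terms yield the coercive energy of the difference; the nonlinear convective term produces a cross term that is controlled, using the $L^\infty$- and $H^1$-bounds on $\bld\beta$ from Proposition \ref{lemma:div-proj} together with the energy bound $\vertiii{(\bld u_h^j,\muhat^j)}_{1,\Oh}\le C\nu^{-1}\|\bld f\|_\Omega$, by a factor proportional to $\nu^{-2}\|\bld f\|_\Omega$ times the energy of the difference. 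When $\nu^{-2}\|\bld f\|_\Omega$ is small enough this factor is strictly less than one, forcing the difference to vanish. \textbf{The main obstacle} I anticipate is precisely this convective estimate: bounding the trilinear form $b(\bld u_h^1;\bld u_h^1-\bld u_h^2,\cdot)-b(\bld u_h^2;\bld u_h^1-\bld u_h^2,\cdot)$ on the discrete, hybridized spaces, where one must handle both the interior term $(\bld u_h\otimes\bld\beta,\gradv\vv)$ and the upwind trace terms simultaneously, invoking the stability of the post-processing $\mbeta$ and a discrete Sobolev-type inequality to convert the $H^1$-seminorm control into the $L^\infty$/$L^4$ control needed to close the argument.
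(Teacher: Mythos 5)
Your proposal is correct and follows essentially the same route as the paper: freeze the convective velocity $\bld\beta=\mbeta(\bld w,\mwhat)$, use the energy identity together with the nonnegativity of the upwinded convective form (Lemma \ref{lemma:ns-energy}) and the global discrete $H^1$-inequality (Theorem \ref{thm:dh1-global}) to get the a priori bound $\vertiii{(\bld u_h,\muhat)}_{1,\Oh}\lesssim\nu^{-1}\|\bld f\|_\Omega$, and then run a fixed-point argument on the resulting self-map of a ball. The only structural difference is that you invoke Brouwer and prove uniqueness separately, whereas the paper applies the Banach contraction principle on the set $Z_h$ of discretely divergence-free pairs; the contraction estimate the paper establishes --- via the Lipschitz bound \eqref{oh-lipschitz} on the convective form $\OO$ and the stability of $\mbeta$ from Proposition \ref{lemma:div-proj}, yielding a factor $\nu^{-2}\|\bld f\|_\Omega$ --- is exactly the estimate your uniqueness step requires, so the two packagings cost the same. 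Two small points to tighten: (i) your ball $B$ should be taken inside the discretely divergence-free subspace (pairs satisfying \eqref{ns-HDG-equations-3}), since Proposition \ref{lemma:div-proj} guarantees $\divs\mbeta(\bld w,\mwhat)=0$ only under that constraint, and the linear Oseen solve does return such a pair, so the map is well defined there; (ii) the trilinear-form bounds you flag as the main obstacle are precisely the content of Lemma \ref{lemma:OO}, which the paper imports from \cite{CesmeliogluCockburnQiu17} rather than reproving.
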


\subsection{Convergence properties}
\label{subsec:ns-error}
Having shown that the approximate solutions are well defined, we next measure how well they approximate the exact solution  by
comparing them with suitably chosen projections of the exact solution. 

\subsubsection{Projections of the errors}
\label{subsubsec:projs_ns}
Let us define the projections we are going to use in our a priori error analysis.
We denote $P_\GG$, $P_\VV$, $\Piq$, $P_{\bld{M}}$ to be the $L^2$-projections onto 
$\GG_h$, $\Vh$, $\Qh$, and $\bld{M}_h$. We also define the projection $\Piww$ into the space $\Vh$ as follows. On the element
$K$, $\Piww \bld u\in \bld{V}(K)$ is defined as follows:
\begin{subequations}
 \label{bu-projection}
\begin{alignat}{2}
 \label{bu-projection-1}
 (\Piww \bld u, \vv)_K = &\; (\bld u,\vv)_K &&\;\;\forall\; w\in \divv \GG,\\
 \label{bu-projection-2}
 \bintK{\Piww \bld u}{\mwwhat} = &\; \bintK{\bld u}{\mwwhat} &&\;\;\forall\; \mwwhat\in \bld {M_S}.
\end{alignat}
\end{subequations}
Our strategy is to first estimate the size of the projection of the errors
\begin{alignat*}{3}
 \egg = \Pigg \ml - \ml_h,\;\;&  \euu = \Piww \bld u - \bld u_h, &&\;\;
 \epp = \Piq p - p_h, && \;\;\euuhat = \Pimm \bld u - \muhat,
 \end{alignat*}
and then use the triangle inequality to estimate the size of the actual errors. 
To do that, we need to use the well-known approximation properties 
of the various $L^2$-projections. We also need the approximation properties of the 
projection $\Piww$  which we show depend on the $L^2$-projection $P_\VV$.
The following result, \ber{proven in Appendix \ref{sec:B},} is a direct consequence of 
the assumption on the stabilization space $\bld{M_S}$.
\begin{proposition}
\label{lemma:projection-b}
 For the projection $\Piww \bld u\in \VV(K)$ defined above, we have 
   \begin{alignat*}{2}
    \|\Piww \bld u - \bld u\|_K\le &\;C\, \left(\|P_{\VV} \bld u - \bld u\|_K+h_K^{1/2}\|P_{\VV} \bld u -\bld  u\|_\dK \right)\\
\|\Piww \bld u\|_{\infty,K}\le &\;C\, \|\bld u\|_{\infty,K},
 \end{alignat*}
 where the constant $C$ only depends on the spaces $\VV(K)$ and $\bld{M_S}(K)$.
\end{proposition}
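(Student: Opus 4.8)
The plan is to prove Proposition \ref{lemma:projection-b}, which bounds the projection $\Piww$ (defined by the two conditions \eqref{bu-projection-1}--\eqref{bu-projection-2}) in terms of the standard $L^2$-projection $P_\VV$. The guiding idea is that $\Piww\bld u$ and $P_\VV\bld u$ both live in $\VV(K)$, so their difference $\bld\delta:=\Piww\bld u-P_\VV\bld u$ lies in $\VV(K)$; I would estimate $\bld\delta$ and then combine with the known approximation properties of $P_\VV$ via the triangle inequality. First I would write down what $\bld\delta$ satisfies: subtracting the defining relations of $P_\VV$ (which reproduces $\bld u$ tested against all of $\VV(K)$, hence against $\divv\GG\subset\VV(K)$ in particular) from those of $\Piww$, I obtain
\begin{align*}
(\bld\delta,\vv)_K &= 0 \qquad \forall\,\vv\in\divv\GG,\\
\bintK{\bld\delta}{\mwwhat} &= \bintK{P_\VV\bld u-\bld u}{\mwwhat} \qquad \forall\,\mwwhat\in\bld{M_S}.
\end{align*}
Thus $\bld\delta$ is $L^2(K)$-orthogonal to $\divv\GG=\widetilde{W}$ (componentwise), so $\bld\delta\in\widetilde W^\perp$ componentwise, while its trace is controlled through the $\bld{M_S}$-projection of the already-known quantity $P_\VV\bld u-\bld u$.

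The key step is then a \emph{norm-equivalence} argument on the finite-dimensional space $\widetilde W^\perp(K)$. By the second of conditions \eqref{condition-ms}, $\|P_{M_S}(\cdot)\|_{\dK}$ is a norm on $\widetilde W^\perp$; since all norms on a finite-dimensional space are equivalent, on the reference element there is a constant with $\|\bld\delta\|_{\widehat K}\le C\,\|P_{M_S}\bld\delta\|_{\partial\widehat K}$ for $\bld\delta\in\widetilde W^\perp$. Transporting this to $K$ by a scaling (Piola or affine, componentwise) argument introduces the appropriate powers of $h_K$, giving $\|\bld\delta\|_K\le C\,h_K^{1/2}\|P_{M_S}\bld\delta\|_{\dK}$. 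Using the second defining relation above, $\|P_{M_S}\bld\delta\|_{\dK}^2=\bintK{\bld\delta}{P_{M_S}(P_\VV\bld u-\bld u)}\le\|P_{M_S}\bld\delta\|_{\dK}\|P_\VV\bld u-\bld u\|_{\dK}$, so $\|P_{M_S}\bld\delta\|_{\dK}\le\|P_\VV\bld u-\bld u\|_{\dK}$, and hence $\|\bld\delta\|_K\le C\,h_K^{1/2}\|P_\VV\bld u-\bld u\|_{\dK}$. The triangle inequality $\|\Piww\bld u-\bld u\|_K\le\|\bld\delta\|_K+\|P_\VV\bld u-\bld u\|_K$ then yields the first claimed estimate.

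For the $L^\infty$-bound, I would again split $\Piww\bld u=\bld\delta+P_\VV\bld u$. The term $\|P_\VV\bld u\|_{\infty,K}\le C\|\bld u\|_{\infty,K}$ is the standard $L^\infty$-stability of the $L^2$-projection on a shape-regular element. For $\bld\delta$, an inverse inequality on the finite-dimensional space gives $\|\bld\delta\|_{\infty,K}\le C\,h_K^{-d/2}\|\bld\delta\|_K$, and combining the already-derived $\|\bld\delta\|_K\le C\,h_K^{1/2}\|P_\VV\bld u-\bld u\|_{\dK}$ with $\|P_\VV\bld u-\bld u\|_{\dK}\le C\,h_K^{(d-1)/2}\|P_\VV\bld u-\bld u\|_{\infty,\dK}\le C\,h_K^{(d-1)/2}\|\bld u\|_{\infty,K}$ collapses the $h_K$-powers to a constant, so $\|\bld\delta\|_{\infty,K}\le C\|\bld u\|_{\infty,K}$.

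The main obstacle is making the scaling in the norm-equivalence step rigorous and uniform: one must verify that the reference-to-physical map preserves the decomposition $\VV(K)=\widetilde W\oplus\widetilde W^\perp$ (componentwise this is just the scalar $M(\dK)$-decomposition for $\VV^{\mathrm D}$) and tracks the $\bld{M_S}$-projection correctly, so that the constant $C$ depends only on the reference spaces $\VV(K)$, $\bld{M_S}(K)$ and the shape-regularity of $K$, as claimed. Because $\bld{M_S}$ is defined componentwise from $M_S^{\mathrm D}(\dK)$ and the spaces are affine-equivalent, this should go through, but the bookkeeping of the $h_K$-powers under the change of variables (especially reconciling the different scalings in $L^2(K)$ versus $L^2(\dK)$) is where care is required.
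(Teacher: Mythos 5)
Your argument is correct and is essentially the paper's own proof: the paper also works componentwise, notes that $\Piww \bld u - P_{\VV}\bld u$ belongs to $\Wperp(K)$ because of the first defining relation, controls its boundary trace through $\|P_{M_S}(\cdot)\|_{\dK}$ using the norm condition in \eqref{condition-ms} together with the second defining relation, and concludes by scaling, finite-dimensional norm equivalence and the triangle inequality, with the $L^\infty$ bound then following from the first estimate by an inverse/scaling argument. The only blemish is a harmless sign slip in your trace relation, which should read $\bintK{\bld\delta}{\mwwhat}=\bintK{\bld u-P_{\VV}\bld u}{\mwwhat}$.
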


\subsubsection{A priori error estimates}
Next, we state our main convergence result.

\begin{theorem}
\label{thm:ns-error1}
Let $(\ml_h,\bld u_h,p_h, \muhat)\in \GG_h\times \Vh\times \Qh\times \bld{M}_h(0)$ be the numerical solution of 
\eqref{ns-HDG-equations}.  Assume that
\begin{alignat*}{2}
\pol_k(K)^{d\times d}\times \pol_k(K)^d\times \pol_k(K)&\subset 
\GG(K)\times \VV(K)\times Q(K)&& \qquad\forall\;K\in \Oh,
\\
 \pol_k(F)^d&\subset \bld M(F)&&\qquad\forall\;F\in \Eh.
 \end{alignat*}
 Then, for $\nu^{-2}\|\bld{f}\|_{\Omega}$ and $\nu^{-1}\|\bld{u}\|_{\infty,\Omega}$ sufficiently small, we have
 \begin{align}
 \label{est-1}
 \|\egg\|_{\Oh} + \|e ^p\|_{\Oh}
 +\vertiii{(\euu,\euuhat)}_{1,\Oh}
 +h^{-1}\,\vertiii{(\euu,\euuhat)}_{\mbox{{\rm \tiny PF}},\Oh}
 +\|\eu\|_\Oh \le &\;C\, h^{k+1}
\,
\Xi,
 \end{align}
 where 
 $
  \Xi:=\|\ml\|_{k+1}+
\nu^{-1}\,  \|\bld{\beta}\|_{\infty,\Omega}\,\|\bld u\|_{k+1}+
\nu^{-1}\,\|p\|_{k+1}
 $ 
and the constant $C$ only depends on the finite element 
spaces, the shape-regularity of the mesh, and the domain $\Omega$.

Moreover, if 
$
\nu^{-1}\|\nabla \bld{u}\|_{\Omega}$ is small enough, $\bld{u}\in \boldsymbol{W}^{1,\infty}(\Omega)$ and 
the regularity estimate in  {\rm \cite[(2.3) ]{CesmeliogluCockburnQiu17}} holds,  then
\begin{align}
\label{est-2}
\|e_{u}\|_{\Omega}\leq C\, h^{k+2} \quad \forall k\geq 1.
\end{align}
Finally, if  $\bld{u}_h^{*}\in H(\rm div,\Omega)$ is the post-processed approximate velocity 
introduced in {\rm \cite [(2.9)]{CockburnFuQiu16}}, then we have 
$\nabla \cdot \bld{u}_h^{*}=0$ in $\Omega$, and 
\begin{align}
\label{est-3}
\|\bld{u}_h^{*}-\bld{u}\|_{\Omega}\leq  C\,h^{k+2}\quad \forall k\geq 1.
\end{align}
\end{theorem}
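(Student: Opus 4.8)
\textbf{Proof proposal for Theorem \ref{thm:ns-error1}.}

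The plan is to follow the classical \emph{project-the-errors-and-estimate} paradigm for HDG methods, with the new ingredient being the discrete $H^1$-inequality of Theorem \ref{thm:dh1} used at the element level to control the velocity seminorms. First I would derive the error equations by writing the exact solution $(\ml,\bld u,p,\bld u|_{\Eh})$ into the scheme \eqref{ns-HDG-equations} using the tailored projections $(\Pigg,\Piww,\Piq,\Pimm)$ from Section \ref{subsubsec:projs_ns}, subtracting the discrete equations, and thereby obtaining a system for the projected errors $(\egg,\euu,\epp,\euuhat)$. The projection $\Piww$ is chosen precisely so that the troublesome terms $(\bld u-\Piww\bld u,\divv\mg)_K$ and $\langle \bld u-\Piww\bld u,P_{\bld M_S}(\cdot)\rangle_{\dK}$ vanish, leaving only consistency residuals that are $L^2$-projection errors of size $O(h^{k+1})$ times the stated Sobolev norms of $\ml$, $\bld u$, and $p$, by standard approximation theory together with Proposition \ref{lemma:projection-b}.

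Next I would establish the energy estimate \eqref{est-1} for the viscous, pressure, and velocity-gradient unknowns. Testing the error equations with $(\egg,\euu,\epp,\euuhat)$ and integrating by parts, the pressure and incompressibility terms should cancel against each other (the discrete divergence-free structure of $\mbeta$ from Proposition \ref{lemma:div-proj} is essential here, guaranteeing $\mbeta(\bld u_h,\muhat)\in H(\mathrm{div},\Omega)$ with zero divergence, so the convective form is skew-symmetric modulo controllable terms). This yields control of $\|\egg\|_\Oh^2$ plus the stabilization term $\sum_K h_K^{-1}\|P_{\bld M_S}(\euu-\euuhat)\|_{\dK}^2$, which is exactly the quantity $\Theta_K$ appearing in Theorem \ref{thm:dh1}. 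Applying Theorem \ref{thm:dh1} componentwise then upgrades this into control of $\vertiii{(\euu,\euuhat)}_{1,\Oh}$ and $h^{-1}\vertiii{(\euu,\euuhat)}_{\mbox{\rm \tiny PF},\Oh}$, and the Poincar\'e-Friedrichs form simultaneously gives $\|\eu\|_\Oh$. The nonlinear convective contribution is absorbed into the left-hand side under the smallness hypotheses on $\nu^{-2}\|\bld f\|_\Omega$ and $\nu^{-1}\|\bld u\|_{\infty,\Omega}$, using the boundedness of the post-processing map (Proposition \ref{lemma:div-proj}) and the energy bound of Theorem \ref{thm:discrete-h1-ns} to keep $\bld\beta$ controlled; the pressure estimate $\|e^p\|_\Oh$ follows from the inf-sup stability encoded in the $M(\dK)$-decomposition \eqref{Mdec}.

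For the superconvergent estimate \eqref{est-2} I would use a duality (Aubin-Nitsche) argument: introduce the adjoint Navier-Stokes problem with right-hand side $\eu$, invoke the regularity estimate cited from \cite{CesmeliogluCockburnQiu17}, and bound $\|\eu\|_\Omega^2$ by pairing the error against the projected adjoint solution. The gain of one power of $h$ comes from the $O(h^{k+2})$ approximation of the adjoint velocity in the $H^1$-type seminorm combined with the already-established $O(h^{k+1})$ primal estimate, provided the linearized convective terms are handled under the additional smallness of $\nu^{-1}\|\nabla\bld u\|_\Omega$ and the $\boldsymbol W^{1,\infty}$-regularity of $\bld u$. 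Finally, \eqref{est-3} follows almost immediately: the $H(\mathrm{div})$-conforming, exactly divergence-free post-processing $\bld u_h^*$ from \cite{CockburnFuQiu16} satisfies $\|\bld u_h^*-\bld u\|_\Omega\le C(\|\eu\|_\Omega+\|\euu\|_\Omega+h\,\vertiii{(\euu,\euuhat)}_{1,\Oh})$ by its local construction, and plugging in \eqref{est-1}--\eqref{est-2} gives the $O(h^{k+2})$ bound.

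The main obstacle I anticipate is the \emph{nonlinear convection term}. Unlike the Stokes case of \cite{CockburnFuQiu16}, the trilinear form must be split carefully so that the part lying on the left-hand side is genuinely coercive and the remainder is a consistency error of the right order; the upwind stabilization $\alpha_c$ in \eqref{stabilization-n} together with the divergence-free property of $\bld\beta$ is what makes this possible, but verifying the precise cancellations and establishing that the smallness conditions suffice to close the fixed-point/contraction argument for existence, uniqueness, and the convergence rate simultaneously is the delicate technical heart of the proof. The duality step for \eqref{est-2} is also subtle, since one must ensure the adjoint problem inherits enough regularity and that the linearization error does not spoil the extra power of $h$.
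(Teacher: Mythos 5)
Your proposal matches the paper's proof in all essentials: the error equations via the tailored projections $(\Pigg,\Piww,\Piq,\Pimm)$, the energy argument combined with the global discrete $H^1$/Poincar\'e--Friedrichs inequalities of Theorem \ref{thm:dh1-global}, absorption of the convective terms under the smallness hypotheses using the splitting of Lemma \ref{lemma:oo-term} and the properties of $\mbeta$, and a duality argument for \eqref{est-2}--\eqref{est-3}. The paper itself defers the duality step and several convective estimates to \cite{CesmeliogluCockburnQiu17}, exactly as you outline, so no further comparison is needed.
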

Note that this result gives optimal convergence of the velocity gradient $\mathrm{L}_h$,
 the velocity $\bld{u}_h$ and the pressure $p_h$ approximations. It also gives two superconvergence results. The first is the one of the projections of the error in the velocity, which are of order $k+1$ for  $\vertiii{(\euu,\euuhat)}_{1,\Oh}$ and of order  $k+2$ for $\vertiii{(\euu,\euuhat)}_{\mbox{{\rm \tiny PF}},\Oh}$. The second is also for the projection of the error in the velocity. The only difference is that the 
 first superconvergence estimate does not say anything about the convergence properties of the local averages, whereas the second does. Moreover, the second superconvergence result allows the local postprocessing of the velocity $\bld{u}_h^{*}$ to be an $\bld{H}(div)$, globally divergence-free approximation to the velocity converging faster than the original approximation {$\bld{u}_h$}.

\section{\ber{Proofs} of the results of Section 4}
\label{sec:proof-ns}
In this Section, we prove  
Theorem \ref{thm:discrete-h1-ns} \ber{on the existence, uniqueness and boundedness of the approximate solution, and} the convergence properties of 
Theorem \ref{thm:ns-error1}.  

\ber{We would like to emphasize that, due to the existence of the discrete-$H^1$ stability results in Theorem 2.3,  
the proofs in this section can be considered as a word-by-word ''translation`` of the corresponding
proofs in \cite{CesmeliogluCockburnQiu17}, where, for the first time, a superconvergent HDG method was analyzed for 
the incompressible Navier-Stokes equations}

To simplify the notation, we write
$
A\lesssim B 
$
to indicate that $A\le C\, B$ with a constant $C$ that only depends on the 
finite element spaces, the shape-regularity of the mesh and the domain. 

\subsection{Preliminaries}
\subsection*{\ber{Rewriting the method in a compact form}}
To facilitate the analysis, we rewrite the formulation of the methods under consideration by using the bilinear form associated to the 
 Stokes system,
\begin{subequations}
 \label{ns-forms}
 \begin{alignat}{2}
  B_h(\ml,\bld u,p, \bld{\widehat{u}};\; \mg, \bld{v}, q, \mwwhat)
  := &\; 
  \nu(\ml, \mg)_\Oh \ber{+ \nu(\bld u,\divv \mg)_\Oh - \bintEh{\muuhat}{\nu\,\mg\,\n}}\nonumber
  \\
  & \;+(\nu\,\ml,\gradv \vv)_\Oh + \bintEh{-\nu\,\ml\, \n 
  +\alpha_v(\bld u-\muuhat)}{\vv-\mwwhat}\nonumber\\
  & \;-(p,\divs \vv)_\Oh + \bintEh{p\,\n}{\vv-\mwwhat}\nonumber\\
&\;  -(\bld u, \grads q)_\Oh + \bintEh{\muuhat\cdot\n}{q},
\intertext{and the bilinear form associated to the convection,}
  \OO(\bld{\beta}; (u,\uuhat), ( w, \wwhat))
  := &\; -(\bld u\otimes \bld{\beta}, \gradv \vv)_\Oh \nonumber\\
  &\; + \bintEh{\ber{(\bld{\beta}\cdot\n)}\;\muuhat+
  \alpha_c(\bld u-\muuhat)}{\vv-\mwwhat},
 \end{alignat}
\end{subequations}
where $(\ml,\bld u,p, \muuhat)$ and $(\mg, \bld{v}, q,\mwwhat)$ lie in the space 
$\left({H^1}(\Oh)^{d\times d}+\GG_h\right) \times H^1(\Oh)^d\times H^1(\Oh)\times L^2(\Eh; 0)^d$, and 
{\color{blue}$\bld{\beta}\in \VV_{\!\!\beta}\cap \VV^*_h$} { where
 \begin{alignat*}{3}
  \VV_{\!\!\beta}:=&\;\{\vv\in{H}(\mathrm{div},\Omega):&&\;\divs \vv = 0, \vv\cdot\n|_{\dK}\in L^2(\dK),\; K\in\Oh\},\\
  \VV^*_h:=&\;\{\vv\in{L}^2(\Oh)^d:&&\;\vv|_K\in \VV^*(K),\; K\in\Oh\}.
 \end{alignat*}
}
Now, the equations defining the HDG method \eqref{ns-HDG-equations} can be recast as
\begin{align}
 \label{ns-form-equation}
 B_h(\ml_h,\bld u_h,p_h, \bld{\widehat{u}}_h;\; \mg,\vv,q,\mwhat) + 
 \OO(\bld\beta; (\bld u_h, \muhat), (\vv,\mwhat)) & = (\bld f,\vv)_\Oh,
\end{align}
with $\bld{\beta}= \mbeta(\bld{u}_h,\muhat)$ defined in \eqref{post-process-defn}.
Consistency of the HDG method \eqref{ns-HDG-equations} implies that,
for the exact solution $(\ml,\bld u, p)\in {H^1}(\Omega)^{d\times d}\times H^{2}(\Omega)^d
\times H^1(\Omega)$ of \eqref{ns-equation} 
(assuming $H^2$-regularity),
\begin{align}
 \label{ns-form-equation-ex}
 B_h(\ml,\bld u,p, \bld{{u}};\; \mg,\vv,q,\mwhat) + 
 \OO(\bld u; (\bld u, \bld u), (\vv,\mwhat)) & = (\bld f,\vv)_\Oh
\end{align}
for all $(\mg,\vv,q,\mwhat)\in\GG_h\times \Vh\times \Qh\times \bld{M}_h(0)$.

\subsection*{An inequality for the viscous energy}

Next, we obtain a key inequality for the viscous energy associated the discrete Stokes operator associated with the HDG method \eqref{ns-HDG-equations},
namely,
\begin{align}
\label{ns-energy}
\mathsf{E}(\ml,\bld u, \bld{\widehat{u}}): = &\;
B_h(\ml,\bld u,p, \bld{\widehat{u}};\; \ml,\bld u,p, \bld{\widehat{u}})\nonumber\\
=&\;
\nu(\ml, \ml)_\Oh +
\bintEh{\frac{\nu}{h_K}P_{\bld{M_S}}(\bld u-\muuhat)}{P_{\bld{M_S}}(\bld u-\muuhat)}.
\end{align}

\begin{lemma}
 \label{lemma:ns-energy}
 Let $(\ml_h,\bld u_h,p_h, {\muhat})\in \GG_h\times\Vh\times \Qh\times\bld{M}_h(0)$ 
 be the numerical solution of the linear system
 \eqref{ns-HDG-equations} with a prescribed velocity $\bld{\beta}\in \VV_{\!\!\beta}$,
 then, we have 
\[
  \mathsf{E}(\ml_h,\bld u_h, \muhat) 
  \le  (f, u_h)_\Oh.
  \]
\end{lemma}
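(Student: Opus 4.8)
The plan is to test the weak formulation \eqref{ns-HDG-equations} against the solution itself, thereby producing the viscous energy on the left-hand side and, after cancellations, only the forcing term on the right. Concretely, I would choose the test functions
\[
(\mg,\vv,q,\mwhat) = (\ml_h,\bld u_h,p_h,\muhat)
\]
in the compact equation \eqref{ns-form-equation}. With this choice the Stokes bilinear form $B_h$ becomes exactly $\mathsf{E}(\ml_h,\bld u_h,\muhat)$ by definition \eqref{ns-energy}, and the convective form contributes $\OO(\bld\beta;(\bld u_h,\muhat),(\bld u_h,\muhat))$, while the right-hand side is $(\bld f,\bld u_h)_\Oh$. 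The whole result then reduces to showing that this diagonal convective contribution is nonnegative, i.e.
\[
\OO\bigl(\bld\beta;(\bld u_h,\muhat),(\bld u_h,\muhat)\bigr)\ge 0.
\]

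The main work, and the step I expect to be the principal obstacle, is establishing this sign. I would first expand $\OO$ using its definition \eqref{ns-forms}, integrate the volume term $-(\bld u_h\otimes\bld\beta,\gradv\bld u_h)_\Oh$ by parts elementwise, and use the crucial fact from Proposition \ref{lemma:div-proj} that $\bld\beta\in\VV_{\!\!\beta}$ is divergence-free with single-valued normal traces across interfaces. The divergence-free property makes the interior volume contribution collapse to boundary terms of the form $\tfrac12\langle(\bld\beta\cdot\n)\,\bld u_h,\bld u_h\rangle_{\dOh}$, and the single-valued normal trace ensures that the interface contributions telescope cleanly when summed over all elements. After regrouping, the remaining boundary expression should take the standard upwinding quadratic form
\[
\tfrac12\,\bintEh{|\bld\beta\cdot\n|\,(\bld u_h-\muhat)}{\bld u_h-\muhat},
\]
which is manifestly nonnegative because the upwind stabilization $\alpha_c(\mwwhat)=\max\{\bld\beta\cdot\n,0\}\,\mwwhat$ from \eqref{stabilization-n} was designed precisely to symmetrize the convective jumps into this positive form.

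The delicate bookkeeping lies in correctly handling the mismatch between the volume term (which sees $\bld u_h$) and the numerical trace terms (which see both $\bld u_h$ and $\muhat$), and in verifying that the boundary integral over $\partial\Omega$ vanishes or is controlled; here I would invoke $\muhat\in\bld M_h(0)$, so $\muhat|_{\dO}=0$, together with the weak incompressibility \eqref{ns-HDG-equations-3} satisfied by $(\bld u_h,\muhat)$. I would also use that the normal component of $\bld\beta$ is continuous to pair adjacent elements and cancel the conservative part $(\bld\beta\cdot\n)\muhat$ against its counterpart. Once the convective term is shown to equal the nonnegative upwind form above, the lemma follows immediately, since
\[
\mathsf{E}(\ml_h,\bld u_h,\muhat)
= (\bld f,\bld u_h)_\Oh - \OO\bigl(\bld\beta;(\bld u_h,\muhat),(\bld u_h,\muhat)\bigr)
\le (\bld f,\bld u_h)_\Oh.
\]
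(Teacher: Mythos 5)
Your proposal is correct and follows essentially the same route as the paper: test \eqref{ns-form-equation} with the solution itself and use that $\OO(\bld\beta;(\bld u_h,\muhat),(\bld u_h,\muhat))=\tfrac12\bintEh{|\bld\beta\cdot\n|(\bld u_h-\muhat)}{\bld u_h-\muhat}\ge 0$. The paper simply asserts this sign identity, whereas you sketch its verification (elementwise integration by parts, $\divs\bld\beta=0$, single-valued normal traces, and $\muhat|_{\dO}=0$), and your computation checks out.
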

\begin{proof} By equation \eqref{ns-form-equation} with $(\mg,\bld v,q, \mwhat):=(\ml_h,\bld u_h,p_h, \muhat)$, we get
the energy identity
\[
  \mathsf{E}(\ml_h,\bld u_h, \muhat) + \OO(\bld{\beta};(\bld u_h,\muhat),(\bld u_h,\muhat)) 
   = (\bld f,\vv)_\Oh,
 \]
and since 
 $
  \OO(\bld{\beta};(\bld u_h,\muhat),(\bld u_h,\muhat)) = \;
  \frac{1}{2}\bintEh{|\bld{\beta}\cdot\n|(\bld u_h-\muhat)}{\bld u_h-\muhat}\ge 0,
  $
  the  inequality follows. This completes the proof.
\end{proof}

\subsection*{The new discrete inequalities} Next, we relate the  viscous energy of the discrete Stokes operator with our new discrete inequalitites of 
Theorem \ref{thm:dh1}.

\begin{theorem}[Global, discrete $\bld{H}^1$- and Poincar\'e-Friedrichs inequalities]
 \label{thm:dh1-global}
Let $(\mr_h,\bld z_h,{\mzhat})\in \GG_h\times \Wh\times \Mh$ satisfy 
\vskip-.5truecm  
\[
  (\mr_h, \mg)_\Oh - (\bld z_h, \divv\mg)_\Oh + \bintEh{\mzhat}{\mg\,\n} = 0\qquad \forall\;\mg\in \GG_h.
 \]
Then,
\vskip-.5truecm
 \begin{alignat*}{2}
  \vertiii{(\bld{z}_h,\widehat{\bld{z}}_h)}_{1,\Oh}^2&\le C\,\Theta_h &&\quad\mbox{{\rm ($\bld{H}^1$)},}
  \\
 h^{-2}\, \vertiii{(\bld{z}_h,\widehat{\bld{z}}_h)}_{\mbox{{\rm \tiny PF}},\Oh}^2&\le C\,\Theta_h&&\quad\mbox{{\rm (Poincar\'e-Friedrichs)}},
 \end{alignat*}
\vskip-.5truecm
where
\vskip-.5truecm 
\[
 \Theta_h:=\sum_{K\in\Oh} ( \|\mr_h\|_{K}^2 + 
  h_K^{-1}\|P_{\bld{M}_S}(\bld{z}_h-\widehat{\bld{z}}_h)\|_\dK^2)= \nu^{-1}\, \mathsf{E}(\mr_h,\bld z_h, \widehat{\bld{z}}_h).
 \]
\noindent Here,  the constant $C$ only depends on the finite element spaces $\VV(K)$, $\WW(K)$ and $M_S(\dK)$, 
and on the shape-regularity properties of the elements  $K\in\Oh$.
\end{theorem}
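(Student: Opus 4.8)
The plan is to reduce this global statement to the local inequalities of Theorem \ref{thm:dh1}, applied componentwise, and then to sum over components and elements. First I would recognize that the hypothesis on $(\mr_h,\bld z_h,\widehat{\bld z}_h)$ is nothing but the global, matrix-valued version of the local constitutive relation \eqref{gc_hdg_o} with $\mathrm{c}=\mathrm{Id}$. Since $\GG_h$ is built componentwise from the spaces $\VVD\times\WWD$ admitting an $\MD$-decomposition (see \eqref{ns-local-spaces-a}), I would test the defining relation with matrix fields $\mg$ supported on a single element $K$ and having a single nonzero row. Because both $\divv\mg$ and $\mg\,\n$ act row by row, this produces, for each $K\in\Oh$ and each component index $i$,
\[
(\mr_h^{(i)},\vv)_K-(z_h^{(i)},\divs\vv)_K+\bintK{\widehat z_h^{(i)}}{\vv\cdot\n}=0\qquad\forall\,\vv\in\VV(K),
\]
where $\mr_h^{(i)}$ is the $i$-th row of $\mr_h$ and $z_h^{(i)},\widehat z_h^{(i)}$ are the $i$-th components of $\bld z_h,\widehat{\bld z}_h$. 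This is exactly the map \eqref{gc_hdg} with $u_h:=z_h^{(i)}$, $\uhat:=\widehat z_h^{(i)}$, $\bld q_h:=\mr_h^{(i)}$ and $\mathrm{c}=\mathrm{Id}$.

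Second, with $\mathrm{c}=\mathrm{Id}$ one has $\lmax(K)=1$ and $\|\cdot\|_{\mathrm{c},K}=\|\cdot\|_K$, so the quantity $\Theta_K$ of Theorem \ref{thm:dh1} collapses to $\|\mr_h^{(i)}\|_K^2+h_K^{-1}\|P_{M_S}(z_h^{(i)}-\widehat z_h^{(i)})\|_\dK^2$. Since $\VV(K)\times W(K)$ admits an $M(\partial K)$-decomposition and the component space $M_S(\partial K)$ of $\bld M_S(\partial K)$ satisfies \eqref{condition-ms}, Theorem \ref{thm:dh1} applies to each of these $d$ local problems with a constant $C$ independent of $i$ and $K$. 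Summing the resulting $H^1$-bounds over $i=1,\dots,d$ and $K\in\Oh$, and using that the global seminorm is the sum of the componentwise local ones, that the Frobenius norm satisfies $\|\mr_h\|_K^2=\sum_i\|\mr_h^{(i)}\|_K^2$, and that $P_{\bld M_S}$ acts componentwise through $M_S$, I would obtain
\[
\vertiii{(\bld z_h,\widehat{\bld z}_h)}_{1,\Oh}^2\le C\sum_{K\in\Oh}\left(\|\mr_h\|_K^2+h_K^{-1}\|P_{\bld M_S}(\bld z_h-\widehat{\bld z}_h)\|_\dK^2\right),
\]
and, from the Poincar\'e--Friedrichs part of Theorem \ref{thm:dh1}, the corresponding bound for $h^{-2}\vertiii{(\bld z_h,\widehat{\bld z}_h)}_{\mbox{{\rm \tiny PF}},\Oh}^2$.

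Finally, I would identify the right-hand side with the viscous energy. From the definition \eqref{ns-energy} of $\mathsf{E}$, and using that $P_{\bld M_S}$ is an idempotent, self-adjoint $L^2(\partial K)$-projection, dividing by $\nu$ gives precisely $\nu^{-1}\mathsf{E}(\mr_h,\bld z_h,\widehat{\bld z}_h)=\sum_{K\in\Oh}(\|\mr_h\|_K^2+h_K^{-1}\|P_{\bld M_S}(\bld z_h-\widehat{\bld z}_h)\|_\dK^2)=\Theta_h$, which closes the argument. I do not expect any genuine analytic obstacle here, since all the real work is carried by the local Theorem \ref{thm:dh1}; the only points demanding care are the row-wise decoupling of the matrix-valued relation and the observation that $\mathrm{c}=\mathrm{Id}$ forces $\lmax(K)=1$, after which the summation and the energy identity are routine bookkeeping.
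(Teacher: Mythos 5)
Your proposal is correct and follows essentially the same route as the paper: localize the defining relation by testing with single-element, single-row matrix fields, apply Theorem \ref{thm:dh1} componentwise with $\mathrm{c}=\mathrm{Id}$ (so $\lmax(K)=1$), and sum over $i$ and $K$. The identification of the right-hand side with $\nu^{-1}\mathsf{E}$ is, as you say, immediate from the definition \eqref{ns-energy} and the fact that $P_{\bld{M}_S}$ is an $L^2$-projection.
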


\begin{proof} This result follows from the local discrete inequalities of Theorem \ref{thm:dh1}.
For $i=1,\dots,d$, let  $\mathrm{\mr}_i$ denote the $i$-th row of the matrix $\mr$, and let $\vv_i$ denote the $i$-th component of the vector $\vv$. Then,
by the choice of the local spaces \eqref{ns-local-spaces-a}, we have that, on the element $K$,
\[
((\mr_h)_i, (\bld z_h)_i, (\mzhat)_i) \in 
\VVD\times \WWD\times \MD,
\]
and since $\VVD\times \WWD$ admits an $\MD$-decomposition, 
we can apply Theorem \ref{thm:dh1} with $\mathrm{c}=\mathrm{Id}$  and
$(\bld{q}_h,u_h,\uhat):=((\mr_h)_i, (\bld z_h)_i, (\mzhat)_i)$.  The inequalities now follow by adding over all element $K\in\Oh$ and then
over the components $i=1,\dots,d$. This completes the proof.
\end{proof}

\subsection*{Properties of the convective form $\OO$}
In the next result, we gather some properties of the convective form $\OO$.
\begin{lemma}[Properties of the nonlinear term $\OO$ {\rm \cite[Proposition 3.4, Proposition 3.5]{CesmeliogluCockburnQiu17}}]
\label{lemma:OO} 
For any $(\bld{v}_h,\bld{\widehat{v}}_h)\in \Vh\times \bld{M}_h(0)$, we have
\begin{subequations}
 \begin{align}
 \label{oh-lipschitz}
 |\OO(\bld{\beta};(\bld{u},\bld{\widehat{u}}), (\bld{v}_h,\mwhat))|
 \lesssim \vertiii{(\bld{\beta},\ave{\bld{\beta}})}_{1,\Oh}
 \vertiii{(\bld{u},\bld{\widehat{u}})}_{1,\Oh}
 \vertiii{(\bld{v}_h,\bld{\widehat{v}}_h)}_{1,\Oh},
\end{align}
for all $\bld{\beta}\in \Vh^*$ and 
$(\bld{u},\bld{\widehat{u}})\in \Vh\times \bld{M}_h(0)$,
 \begin{align}
 \label{oh-linf-1}
 |\OO(\bld{\beta};(\bld{u},\bld{\widehat{u}}), (\bld{v}_h,\mwhat))|
 \lesssim \|\bld{\beta}\|_{\infty,\Omega}
 \vertiii{(\bld{u},\bld{\widehat{u}})}_{0,\Oh}
 \vertiii{(\bld{v}_h,\bld{\widehat{v}}_h)}_{1,\Oh},
\end{align}
for all $\bld{\beta}\in L^\infty(\Omega)^d\cap\Vh^*$ and 
$(\bld{u},\bld{\widehat{u}})\in H^1(\Oh)^d\times L^2(\Eh,0)^d$,
and
 \begin{align}
 \label{oh-linf-2}
 |\OO(\bld{\beta};(\bld{u},\bld{\widehat{u}}), (\bld{v}_h,\mwhat))
 -\OO(\bld{\gamma};(\bld{u},\bld{\widehat{u}}), (\bld{v}_h,\mwhat))|\hspace{2cm}\nonumber\\
 \lesssim \vertiii{(\bld{\beta}-\bld{\gamma},0)}_{0,\Oh}
 \vertiii{(\bld{u},\bld{\widehat{u}})}_{\infty,\Oh}
 \vertiii{(\bld{v}_h,\bld{\widehat{v}}_h)}_{1,\Oh},
\end{align}
for all $\bld{\beta}\in H^1(\Oh)^d+\Vh^*$ and
$(\bld{u},\widehat{\bld{u}})\in L^\infty(\Oh)^d\times L^\infty(\Eh)^d$.
\end{subequations}
\end{lemma}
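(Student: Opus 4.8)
The plan is to prove the three estimates \eqref{oh-lipschitz}, \eqref{oh-linf-1}, and \eqref{oh-linf-2} by unfolding the definition of $\OO$ in \eqref{ns-forms} and bounding its two pieces separately: the volume term $-(\bld u\otimes \bld{\beta}, \gradv \vv)_\Oh$ and the boundary term $\bintEh{(\bld{\beta}\cdot\n)\muuhat+\alpha_c(\bld u-\muuhat)}{\vv-\mwwhat}$, where $\alpha_c(\mwwhat)=\max\{\bld{\beta}\cdot\n,0\}\,\mwwhat$ by \eqref{stabilization-n}. Throughout I will lean on the discrete $\bld{H}^1$-norm $\vertiii{\cdot}_{1,\Oh}$ and the equivalence with $\vertiii{\cdot}_{0,\Oh}$ granted by the discrete Poincar\'e-Friedrichs inequality of Theorem~\ref{thm:dh1} (applied globally via Theorem~\ref{thm:dh1-global}), which is exactly what lets us trade $L^2$-norms for $\bld{H}^1$-seminorms on discrete spaces.

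First I would treat the Lipschitz-type estimate \eqref{oh-lipschitz}. For the volume term, apply a generalized H\"older inequality $|(\bld u\otimes\bld{\beta},\gradv\vv)_K|\le \|\bld u\|_{L^4(K)}\|\bld{\beta}\|_{L^4(K)}\|\gradv\vv\|_K$, then invoke a discrete Sobolev embedding $\|\cdot\|_{L^4(\Oh)}\lesssim\vertiii{(\cdot,\ave{\cdot})}_{1,\Oh}$ valid on the finite element spaces (this is the discrete $H^1\hookrightarrow L^4$ inequality in $d\le 3$). The boundary term is controlled by scaled trace inequalities: each factor $\|\bld{\beta}\cdot\n\|_{\dK}$, $\|\muuhat\|_{\dK}$, $\|\vv-\mwwhat\|_{\dK}$ carries the correct power of $h_K$ so that after summing over $K$ and using Cauchy-Schwarz in the element index, the product collapses to the three $\vertiii{\cdot}_{1,\Oh}$ factors. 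The estimate \eqref{oh-linf-1} is easier and handled the same way, except that one factor of $\bld{\beta}$ is pulled out in $L^\infty(\Omega)$, so only one discrete-Sobolev step on $(\bld u,\bld{\widehat u})$ is needed and it can be relaxed to the $\vertiii{\cdot}_{0,\Oh}$ norm, since no $L^4$-control of $\bld u$ is required once $\bld{\beta}$ sits in $L^\infty$.

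For the difference estimate \eqref{oh-linf-2}, I would exploit the bilinearity of $\OO(\cdot;(\bld u,\bld{\widehat u}),(\bld v_h,\mwhat))$ in its first slot $\bld{\beta}$: both the volume and boundary integrands are \emph{linear} in $\bld{\beta}$ once $\alpha_c$ is expanded, so $\OO(\bld{\beta};\cdots)-\OO(\bld{\gamma};\cdots)$ is simply $\OO$ evaluated at $\bld{\beta}-\bld{\gamma}$ in the first argument, up to the absolute-value appearing in $\alpha_c$. The one technical point is that $\max\{\bld{\beta}\cdot\n,0\}-\max\{\bld{\gamma}\cdot\n,0\}$ is not linear, but it is pointwise Lipschitz with constant one, $|\max\{a,0\}-\max\{b,0\}|\le|a-b|$, so the difference is bounded by $|(\bld{\beta}-\bld{\gamma})\cdot\n|$ and the estimate proceeds as before. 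Then I place $(\bld u,\bld{\widehat u})$ in $L^\infty$, $(\bld v_h,\mwhat)$ in $\vertiii{\cdot}_{1,\Oh}$, and $\bld{\beta}-\bld{\gamma}$ in the $L^2$-type norm $\vertiii{\cdot}_{0,\Oh}$, matching the right-hand side exactly.

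The main obstacle I expect is establishing the discrete Sobolev embedding $\vertiii{\cdot}_{1,\Oh}\hookrightarrow L^4(\Oh)$ on the broken finite element spaces with the nonconforming jump-controlled norm, since the functions are discontinuous across faces and the standard continuous embedding does not apply directly; this requires a broken Sobolev inequality whose constant depends only on the mesh shape-regularity and polynomial degree, established by comparison to a conforming interpolant plus the jump penalty encoded in $h_K^{-1}\|\bld u-\bld{\widehat u}\|_{\dK}^2$. Since \eqref{oh-lipschitz}--\eqref{oh-linf-2} are quoted from \cite[Proposition~3.4, Proposition~3.5]{CesmeliogluCockburnQiu17}, I would either cite that discrete embedding directly or reproduce its short proof via the enriching operator mapping $\Vh$ into a conforming $\bld{H}^1$-subspace; all remaining steps are routine trace, inverse, and H\"older inequalities that are uniform under shape-regularity.
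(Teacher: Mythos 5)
The paper does not actually prove this lemma: it is imported verbatim, with the citation \cite[Propositions 3.4 and 3.5]{CesmeliogluCockburnQiu17}, and no argument is given in the present text. So there is nothing internal to compare against; the relevant question is whether your reconstruction matches the standard proof in the cited reference, and in outline it does. Splitting $\OO$ into the volume term $-(\bld u\otimes\bld{\beta},\gradv\vv)_\Oh$ and the face term, using a generalized H\"older inequality with a broken $H^1\hookrightarrow L^4$ embedding for \eqref{oh-lipschitz}, pulling $\bld\beta$ out in $L^\infty$ for \eqref{oh-linf-1}, and using linearity in $\bld\beta$ together with the pointwise Lipschitz bound $|\max\{a,0\}-\max\{b,0\}|\le|a-b|$ for \eqref{oh-linf-2} is exactly the route taken there. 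Your observation that \eqref{oh-linf-1} and \eqref{oh-linf-2} need no Sobolev or inverse inequalities at all is correct and worth making explicit: in both cases every factor pairs off directly against the $h_K^{\pm 1/2}$ weights built into $|(\cdot,\cdot)|_{0,K}$ and $|(\cdot,\cdot)|_{1,K}$, including the trace term $h_K\|\bld\beta-\bld\gamma\|^2_{\dK}$ hidden inside $\vertiii{(\bld\beta-\bld\gamma,0)}_{0,\Oh}$.

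The one place where your sketch is genuinely thin is the face term in \eqref{oh-lipschitz}. Writing ``scaled trace inequalities\dots the product collapses'' papers over the real bookkeeping: the integrand $(\bld\beta\cdot\n)\,\muuhat+\max\{\bld\beta\cdot\n,0\}(\bld u-\muuhat)$ forces an $L^4$--$L^4$--$L^2$ H\"older split \emph{on each face}, so you need discrete trace estimates of the form $\|\mu\|_{L^4(\dK)}\lesssim h_K^{-1/4}\vertiii{(\bld u,\widehat{\bld u})}_{1,K}^{\,}$-type bounds (with $d$-dependent exponents) for $\bld\beta$, $\widehat{\bld u}$ and $\bld u-\widehat{\bld u}$ separately, and you must check that the leftover powers of $h_K$ cancel against the $h_K^{-1/2}$ carried by $\|\vv-\mwwhat\|_{\dK}$ after summing over $K$. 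You also need that $\ave{\bld\beta}$ on interior faces is controlled by $\vertiii{(\bld\beta,\ave{\bld\beta})}_{1,\Oh}$, which is why that particular pairing appears in the right-hand side of \eqref{oh-lipschitz}. None of this breaks your argument --- it is precisely what \cite[Proposition 3.4]{CesmeliogluCockburnQiu17} carries out, relying on the broken Sobolev embeddings of \cite{DiPietroDroniouErn10} --- but it is the only step of the lemma that is not routine, and a complete proof would have to display those exponents rather than assert that they match.
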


%
%
%

\subsection{\ber{Proof of Theorem \ref{thm:discrete-h1-ns}}}
Now we are ready to prove 
the existence and uniqueness  of the approximation in Theorem \ref{thm:discrete-h1-ns}.
The proof is almost identical to that in \cite[Section 5]{CesmeliogluCockburnQiu17}. 

We use a Banach fixed-point theorem by constructing a
contraction mapping $\mathcal{F}: Z_h\rightarrow Z_h$, where 
\[
Z_h:=\{(\vv,\mwhat)\in \Vh\times \bld M_h(0):
(\bld{v}_h,\grads q)_\Oh-\bintEh{\mwhat\cdot\n}{q}=0\;\;\forall\; q\in \Qh\}.
\]

Let us show that \ber{there} is a ball $K_h$ inside $Z_h$ such that \ber{$\mathcal{F}$} maps $K_h$ into $K_h$.
For a pair $(\bld w_h,\widehat{\bld{w}}_h)\in Z_h$, the mapping is defined by 
$\mathcal{F}(\bld w_h,\widehat{\bld{w}}_h):=(\bld{u}_h,\muhat)$ 
with $(\bld{u}_h,\muhat)$ being part of the numerical solution to the linear system \eqref{ns-HDG-equations} with 
$\bld{\beta}=\mbeta(\bld w_h,\widehat{\bld{w}}_h)$.
By Lemma \ref{lemma:div-proj},  we have that $\bld{\beta}\in \VV_{\!\!\beta}$.
Then, 
\begin{alignat*}{2}
\vertiii{(\bld{u}_h,\muhat)}_{1,\Oh}^2 
&\lesssim\, \nu^{-1}  \mathsf{E}(\ml_h,\bld u_h, \muhat) 
&&\quad\mbox{ by Theorem \ref{thm:dh1-global},}
\\
&\lesssim\,  \nu^{-1}(\bld{f}, \bld{u}_h)_\Oh
&&\quad\mbox{ by Lemma \ref{lemma:ns-energy},}
\\
&\lesssim\, \nu^{-1} \|\bld{f}\|_\Oh\, \|\bld{u}_h\|_\Oh
\\
&\lesssim\,  \nu^{-1}\|\bld{f}\|_\Oh\,\vertiii{(\bld{u}_h,\muhat)}_{1,\Oh},
\end{alignat*}
and we get 
\[
 \vertiii{(\bld{u}_h,\muhat)}_{1,\Oh}\lesssim \nu^{-1}\|\bld f\|_\Oh.
\]
Then, defining 
\newcommand{\cs}{C_{\mathrm{sm}}}
\[
 K_h:=\{(\vv,\mwhat)\in Z_h:\;\;\vertiii{(\vv,\mwhat)}_{1,\Oh}\le \cs\,\nu^{-1}\|\bld f\|_\Oh\},
\]
with a positive constant $\cs$ {\color{blue} big} enough, we conclude that $\mathcal{F}$ maps $K_h$ into itself.

Now we only have to show that $\mathcal{F}$ is a contraction in $K_h$. 
Set $(\bld{u}_h^1,\muhat^1):=\mathcal{F}(\bld{w}_h^1,\widehat{\bld{w}}_h^1)$ and 
$(\bld{u}_h^2,\muhat^2):=\mathcal{F}(\bld{w}_h^2,\widehat{\bld{w}}_h^2)$ with
$(\bld{w}_h^i,\widehat{\bld{w}}_h^i)\in K_h$ for $i=1,2$.
Now, let $(\ml_h^i, \bld{u}_h^i, p_h^i, \muhat^i)$ be the solution to \eqref{ns-HDG-equations} with 
$\bld{\beta}^i: = \mbeta(\bld w_h,\widehat{\bld w}_h)$. Using  $\ddl:=\ml_h^1-\ml_h^2$ and similar definitions for 
$\ddu, \ddp,\dduhat, \dbeta, \ddw$, and $\ddwhat$, and the fact that 
equation \eqref{ns-form-equation} is satisfied for $i=1,2$,
to conclude that 
\begin{alignat*}{2} 
 \mathsf{E}(\ddl, \ddu, \dduhat) = &\;
 -\OO(\bld{\beta}^1;(\bld{u}_h^1, \muhat^1),(\ddu,\dduhat))
 +\OO(\bld{\beta}^2;(\bld{u}_h^2, \muhat^2),(\ddu,\dduhat))
 \\
 = &\;
 -\OO(\dbeta,;(\bld{u}_h^1, \muhat^1),(\ddu,\dduhat))
 -\OO(\bld{\beta}^2;(\ddu, \dduhat),(\ddu,\dduhat))
\\
 \le  &\;
 -\OO(\dbeta;(\bld{u}_h^1, \muhat^1),(\ddu,\dduhat)).
\end{alignat*}
By Lemma \ref{lemma:OO}, we easily get that
\begin{alignat*}{2} 
\mathsf{E}(\ddl, \ddu, \dduhat)  \lesssim &\;
 \vertiii{(\dbeta, \ave{\dbeta})}_{1,\Oh}
 \vertiii{(\bld{u}_h^1, \muhat^1)}_{1,\Oh}
 \vertiii{(\ddu,\dduhat)}_{1,\Oh}\\
 \lesssim &\;
 \vertiii{(\ddw, \ddwhat)}_{1,\Oh}
 \vertiii{(\bld{u}_h^1, \muhat^1)}_{1,\Oh}
   \vertiii{(\ddu,\dduhat)}_{1,\Oh}
   &&\mbox{ by Proposition \ref{lemma:div-proj},}
\\
\lesssim&\;\nu^{-1}  \|\bld f\|_\Oh \vertiii{(\ddw, \ddwhat)}_{1,\Oh}
 \vertiii{(\ddu,\dduhat)}_{1,\Oh},
\end{alignat*}
by Theorem \ref{thm:discrete-h1-ns}. Combining this result with Theorem  \ref{thm:dh1-global}, we immediately get 
\[
 \vertiii{(\ddu,\dduhat)}_{1,\Oh} \lesssim\;\nu^{-2}  \|\bld f\|_\Oh \vertiii{\ber{(\ddw, \ddwhat)}}_{1,\Oh}.
\]
Hence, for $\nu^{-2}  \|\bld f\|_\Oh$ sufficiently small, the mapping $\mathcal{F}$ is a contraction
in $K_h$.  This completes the proof of Theorem  \ref{thm:discrete-h1-ns}.

\subsection{Proof of estimate \eqref{est-1} in Theorem 4.4}
\ber{The energy estimate \eqref{est-1} in Theorem 4.4 directly follows from
Proposition \ref{lemma:projection-b}, the approximation properties of the finite element spaces and from
Theorem 5.4 below.
To simplify the notation, we introduce the following approximation errors:} 
\begin{alignat*}{3}
\dgg := \ml -\Pigg \ml, \;\;& \duu := \bld u - \Piww \bld u, &&\;\;
 \dpp := p -\Piq p, && \;\;\duuhat :=\bld u -  \Pimm \bld u.
\end{alignat*}
\begin{theorem}
\label{thm:ns-error}
Under the assumptions of Theorem \ref{thm:ns-error1},
we have
\begin{alignat*}{2}
 \|\egg\|_{\Oh} 
 +\vertiii{(\euu,\euuhat)}_{1,\Oh}
 +h^{-1}\,\vertiii{(\euu,\euuhat)}_{0,\Oh}
 +\|\eu\|_\Oh \le
 C\,\nu^{-1}\,\Theta_{ns}^{1/2},
\end{alignat*}
where
\begin{align*}
 \Theta_{ns} := &\;
\sum_{K\in \Oh}{h_K}\,(\|\nu\,\dgg\,\n\|_\dK^2 + \|\dpp\|_\dK^2)+\|\bld{u}\|_{\infty,\Omega}^2\,\vertiii{(\duu,\duuhat)}_{0,\Oh}^2.
\end{align*}
Here, the constant $C$ depends only on the finite element 
spaces, the shape-regularity of the mesh $\Oh$, and the domain $\Omega$.
\end{theorem}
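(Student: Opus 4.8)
The plan is to reproduce the energy argument of \cite{CesmeliogluCockburnQiu17}, which is now available for our general spaces thanks to the global discrete inequalities of Theorem \ref{thm:dh1-global}. First I would subtract the discrete formulation \eqref{ns-form-equation} from the consistency identity \eqref{ns-form-equation-ex} and split each error into a projection part and an approximation part, namely $\ml-\ml_h=\dgg+\egg$, $\bld u-\bld u_h=\duu+\euu$, $p-p_h=\dpp+\epp$ and $\bld u-\muhat=\duuhat+\euuhat$. Since $B_h$ is linear in its first slot, this produces an identity for $B_h(\egg,\euu,\epp,\euuhat;\,\cdot)$ whose right-hand side is the projection-consistency term $-B_h(\dgg,\duu,\dpp,\duuhat;\,\cdot)$ together with the convective difference $\OO(\bld u;(\bld u,\bld u),\cdot)-\OO(\bld{\beta};(\bld u_h,\muhat),\cdot)$, where $\bld{\beta}=\mbeta(\bld u_h,\muhat)$.

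The decisive simplification comes from the defining properties of the projections. Testing $B_h(\dgg,\duu,\dpp,\duuhat;\,\mg,\vv,q,\mwhat)$ against arbitrary $(\mg,\vv,q,\mwhat)\in\GG_h\times\Vh\times\Qh\times\bld M_h(0)$, I would verify term by term that almost everything cancels: $\nu(\dgg,\mg)_\Oh=0$ since $\dgg\perp\GG_h$; $(\nu\dgg,\gradv\vv)_\Oh=0$ because $\gradv\vv\in\GG_h$ by property (b) of the $M$-decomposition and the choice \eqref{ns-local-spaces}; $\nu(\duu,\divv\mg)_\Oh=0$ and $(\dpp,\divs\vv)_\Oh=0$ by \eqref{bu-projection-1}, \eqref{require-w} and the $L^2$-orthogonality of $\Piq$; the trace pairings against $\duuhat$ vanish by property (a) and the orthogonality of $\Pimm$; and the stabilization contribution $\alpha_v(\duu-\duuhat)$ vanishes after $P_{\bld{M_S}}$ by \eqref{bu-projection-2}. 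What survives is exactly the boundary pairing $\bintEh{-\nu\,\dgg\,\n+\dpp\,\n}{\vv-\mwhat}$, which is the origin of the first group of terms in $\Theta_{ns}$.

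Next I would take $(\mg,\vv,q,\mwhat)=(\egg,\euu,\epp,\euuhat)$, so that the left-hand side becomes the viscous energy $\mathsf{E}(\egg,\euu,\euuhat)$, which by Theorem \ref{thm:dh1-global} controls $\nu\,\|\egg\|_\Oh^2$, $\nu\,\vertiii{(\euu,\euuhat)}_{1,\Oh}^2$ and, through the Poincar\'e--Friedrichs branch, $\nu\,h^{-2}\vertiii{(\euu,\euuhat)}_{\mbox{{\rm \tiny PF}},\Oh}^2$. The surviving boundary term is handled by Cauchy--Schwarz and a scaled trace inequality, giving $\big(\sum_{K\in\Oh}h_K(\|\nu\dgg\n\|_{\partial K}^2+\|\dpp\|_{\partial K}^2)\big)^{1/2}\vertiii{(\euu,\euuhat)}_{1,\Oh}$. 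The hard part is the convective difference, which I would write as $\big[\OO(\bld u;(\bld u,\bld u),\cdot)-\OO(\bld{\beta};(\bld u,\bld u),\cdot)\big]+\OO(\bld{\beta};(\duu+\euu,\duuhat+\euuhat),\cdot)$, bounding the first bracket with \eqref{oh-linf-2} and the stability of the post-processing (Proposition \ref{lemma:div-proj}) to estimate $\vertiii{(\bld u-\bld{\beta},0)}_{0,\Oh}$, and the second with \eqref{oh-linf-1} using $\|\bld{\beta}\|_{\infty,\Omega}$ controlled in terms of $\|\bld u\|_{\infty,\Omega}$. I expect the genuine obstacle to be the resulting quadratic-in-error contributions of the form $\nu^{-1}\|\bld u\|_{\infty,\Omega}\vertiii{(\euu,\euuhat)}_{0,\Oh}\vertiii{(\euu,\euuhat)}_{1,\Oh}$: here one must invoke the Poincar\'e--Friedrichs bound of Theorem \ref{thm:dh1-global} to trade $\vertiii{(\euu,\euuhat)}_{0,\Oh}$ for $h\,\vertiii{(\euu,\euuhat)}_{\mbox{{\rm \tiny PF}},\Oh}$ and then use the smallness of $\nu^{-1}\|\bld u\|_{\infty,\Omega}$ (and $\nu^{-2}\|\bld f\|_\Omega$ through Theorem \ref{thm:discrete-h1-ns}) to absorb them into the left-hand side while keeping the $\nu$-dependence in the stated form. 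Dividing by $\nu$ then yields the bounds on $\|\egg\|_\Oh$, $\vertiii{(\euu,\euuhat)}_{1,\Oh}$ and $h^{-1}\vertiii{(\euu,\euuhat)}_{0,\Oh}$ in terms of $\nu^{-1}\Theta_{ns}^{1/2}$.

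Finally, the pressure projection error $\|\eu\|_\Oh$ is not seen by the energy and requires a separate discrete inf--sup argument: I would pick a test velocity $\vv\in\Vh$ whose divergence reproduces $\epp$ with $\|\vv\|_{1}\lesssim\|\epp\|_\Oh$, insert $(\mg,q,\mwhat)=0$ and this $\vv$ into the momentum row of the error equation so that $-(\epp,\divs\vv)_\Oh=-\|\epp\|_\Oh^2$ emerges, and bound the remaining terms by the quantities already controlled above. Collecting the two parts gives the full estimate of Theorem \ref{thm:ns-error}.
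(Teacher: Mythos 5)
Your overall strategy --- deriving the error equation from consistency plus the orthogonality properties of the projections (including the crucial cancellation of the stabilization term through \eqref{bu-projection-2}), testing with $(\egg,\euu,\epp,\euuhat)$ to expose the viscous energy, bounding the surviving boundary pairing $\bintEh{\nu\,\dgg\,\n-\dpp\,\n}{\euu-\euuhat}$ by Cauchy--Schwarz, and absorbing the quadratic convective contribution using the smallness of $\nu^{-1}\|\bld u\|_{\infty,\Omega}$ --- is exactly the paper's (Lemma \ref{lemma:energy-ns}, Lemma \ref{lemma:oo-term} and Theorem \ref{thm:dh1-global}). However, your treatment of the convective difference has a genuine gap. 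You propose to bound $\OO(\bld{\beta};(\duu+\euu,\duuhat+\euuhat),(\euu,\euuhat))$ by \eqref{oh-linf-1} ``using $\|\bld{\beta}\|_{\infty,\Omega}$ controlled in terms of $\|\bld u\|_{\infty,\Omega}$''. No such control is available: $\bld{\beta}=\mbeta(\bld u_h,\muhat)$ is built from the unknown discrete solution, and an $L^\infty$ bound on $\bld u_h$ in terms of $\bld u$ would require the very error estimate being proved (or an inverse inequality applied to it), so the argument is circular. The four-term splitting of Appendix \ref{sec:C} exists precisely to avoid this: the purely quadratic piece $-\OO(\bld{\beta};(\euu,\euuhat),(\euu,\euuhat))$ is discarded by the sign property of the upwind form ($T_1\le 0$), and wherever an $L^\infty$ bound on the convecting field or on the transported pair is required, $\bld{\beta}$ and $(\bld u_h,\muhat)$ are replaced by $\mbeta(\Piww\bld u,\Pimm\bld u)$ and $(\Piww\bld u,\Pimm\bld u)$, whose $L^\infty$ norms \emph{are} controlled by $\|\bld u\|_{\infty,\Omega}$ thanks to Propositions \ref{lemma:div-proj} and \ref{lemma:projection-b}; the leftover differences are handled with \eqref{oh-linf-2} together with the factor $\vertiii{(\euu,\euuhat)}_{0,\Oh}$, which is then absorbed. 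You must adopt this splitting (or, alternatively, use the Lipschitz bound \eqref{oh-lipschitz} combined with $\vertiii{(\bld u_h,\muhat)}_{1,\Oh}\lesssim\nu^{-1}\|\bld f\|_\Omega$ and the smallness of $\nu^{-2}\|\bld f\|_\Omega$) to close the estimate.

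A second, smaller issue is your final paragraph: the term $\|\eu\|_\Oh$ in the statement is a norm of the \emph{velocity} error ($\eu=e_u$, not $e_p$); the pressure does not appear in this theorem at all. The paper disposes of $\|\eu\|_\Oh$ in one line via the discrete Sobolev inequality of \cite[Theorem 2.1]{DiPietroDroniouErn10}, namely $\|\eu\|_\Oh\le\vertiii{(\euu,\euuhat)}_{1,\Oh}$, so no discrete inf--sup argument is needed here; that argument belongs to the pressure estimate of Theorem \ref{thm:ns-error1}, which is a separate step.
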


\ber{The rest of this subsection is devoted to the proof of Theorem 5.4.
We need the following two auxiliary results.}
\begin{lemma}
 \label{lemma:energy-ns}
 We have 
 \begin{align*}
   B_h(\egg,\euu,\epp, \euuhat;\,\mg,\vv, q,\mwhat)
  = &\;
\bintEh{\nu\,\dgg\,\n-\dpp\,\n}{\vv-\mwhat}  \\
&\; +   \OO(\mbeta(\bld{u}_h,\muhat); (\bld u_h,\muhat),\,(\vv,\mwhat))\nonumber\\
&\; -   \OO(\bld{u}; (\bld u,\bld{u}),\,(\vv,\mwhat))\nonumber
 \end{align*}
 for all 
 $(\vv,\ww_h,\what)\in \Vh\times \Wh\times \Mh(0)$.
 \end{lemma}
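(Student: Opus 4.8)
The plan is to derive this identity from \emph{consistency} together with the built-in orthogonality of the four projections $\Pigg,\Piww,\Piq,\Pimm$. First I would test the discrete equation \eqref{ns-form-equation} and the consistency equation \eqref{ns-form-equation-ex} against the same $(\mg,\vv,q,\mwhat)\in\GG_h\times\Vh\times\Qh\times\bld{M}_h(0)$ and subtract. Since $B_h$ is linear in its first four arguments, this yields the Galerkin relation
\[
B_h(\ml-\ml_h,\bld u-\bld u_h,p-p_h,\bld u-\muhat;\,\mg,\vv,q,\mwhat)=\OO(\mbeta(\bld u_h,\muhat);(\bld u_h,\muhat),(\vv,\mwhat))-\OO(\bld u;(\bld u,\bld u),(\vv,\mwhat)).
\]
Splitting each total error into projection error plus discrete error, $\ml-\ml_h=\dgg+\egg$, $\bld u-\bld u_h=\duu+\euu$, and so on, and using linearity once more, the claim reduces to the purely-projection identity
\[
B_h(\dgg,\duu,\dpp,\duuhat;\,\mg,\vv,q,\mwhat)=-\bintEh{\nu\,\dgg\,\n-\dpp\,\n}{\vv-\mwhat}.
\]

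Next I would prove this reduced identity by going through the nine terms in the definition \eqref{ns-forms} of $B_h$ and showing that all but two of them vanish. The two volume terms paired against $\GG_h$, namely $\nu(\dgg,\mg)_\Oh$ and $\nu(\dgg,\gradv\vv)_\Oh$, vanish because $\dgg=\ml-\Pigg\ml$ is $L^2(\Oh)$-orthogonal to $\GG_h$; here one uses that $\gradv\vv\in\GG_h$, which follows from $\grads\WWD\subset\VVD$, i.e. property (b) of the $\MD$-decomposition. The two trace terms paired against $\duuhat$, that is $\bintEh{\duuhat}{\nu\,\mg\,\n}$ and $\bintEh{\duuhat\cdot\n}{q}$, vanish because $\duuhat=\bld u-\Pimm\bld u$ is $L^2(\Eh)$-orthogonal to $\bld M_h$ while both $\mg\,\n$ and $q\,\n$ lie in $\bld M_h$ on each face (flatness of the faces together with the trace inclusions coming from property (a)). The term $\nu(\duu,\divv\mg)_\Oh$ is killed by the first defining property \eqref{bu-projection-1} of $\Piww$, since $\divv\mg\in\divv\GG$. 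The viscous-stabilization term $\bintEh{\alpha_v(\duu-\duuhat)}{\vv-\mwhat}$ vanishes because $P_{\bld{M}_S}\duu=0$ by the second defining property \eqref{bu-projection-2} of $\Piww$ and $P_{\bld{M}_S}\duuhat=0$ by the nestedness $\bld{M}_S\subset\bld M$ (so $P_{\bld{M}_S}\Pimm=P_{\bld{M}_S}$). Finally, the pressure term $(\dpp,\divs\vv)_\Oh$ vanishes: $\divs\vv\in\WWD=Q(K)$ by assumption \eqref{require-w}, and $\dpp$ is orthogonal to $\Qh$ once one splits off the $\Omega$-average of $\divs\vv$ and uses $\int_\Omega\dpp=0$. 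After these cancellations only $\bintEh{-\nu\,\dgg\,\n}{\vv-\mwhat}$ and $\bintEh{\dpp\,\n}{\vv-\mwhat}$ survive, which is precisely the asserted right-hand side.

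The step I expect to be the most delicate is the vanishing of the $q$-volume term $-(\duu,\grads q)_\Oh$. Because $q$ enters $B_h$ only here and in the trace term $\bintEh{\duuhat\cdot\n}{q}$, and the latter already vanishes on its own, the volume term must vanish on its own as well. This requires $\grads q\in\divv\GG(K)$, i.e. $\partial_i\WWD\subset\divs\VVD$ for every $i$; this inclusion is \emph{not} an immediate consequence of the abstract $\MD$-decomposition axioms and must be verified family by family. It does hold for all the listed spaces, since $\WWD$ is a polynomial space whose partial derivatives land in $\divs\VVD$ (for instance $\partial_i\pol_k\subset\pol_{k-1}=\divs\VVD$ in case (2) of Proposition \ref{prop:ms}, and analogously on squares and prisms), after which \eqref{bu-projection-1} applies. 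The remaining bookkeeping, namely regrouping the exact-solution contributions so the consistency relation $(\divs\bld u,q)_\Oh=0$ can be invoked and handling the global zero-mean constraint defining $\Qh$, is routine.
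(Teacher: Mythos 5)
Your proposal is correct and follows essentially the same route as the paper, whose proof is just the remark that the identity is ``a direct consequence'' of subtracting \eqref{ns-form-equation-ex} from \eqref{ns-form-equation} and invoking the orthogonality properties of $\Pigg,\Piww,\Piq,\Pimm$, with the stabilization term $\bintK{\alpha_v(\duu-\duuhat)}{\vv-\mwhat}=0$ singled out exactly as you do. Your term-by-term bookkeeping is accurate, and your observation that killing $-(\duu,\grads q)_\Oh$ requires the inclusion $\partial_i\WWD\subset\divs\VVD$ --- which is not among the abstract axioms but holds for every family in Table \ref{table:examples2D} --- is a legitimate point of care that the paper glosses over.
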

\begin{proof}
It is a direct consequence of the definition of the numerical method \eqref{ns-form-equation}, the consistency of the method \eqref{ns-form-equation-ex},
and the definition of the projections in Subsection \ref{subsubsec:projs_ns}.
\ber{In particular,} note that, by the definition of $\Piww$, \ber{there holds}
\[
 \bintK{\frac{\nu}{h_K} P_{\bld{M_S}}(\duu-\duuhat)}{\vv-\mwhat} =
\frac{\nu}{h_K}  \bintK{\duu-\duuhat}{P_{\bld{M_S}}(\vv-\mwhat)} = 0.
\]
\end{proof}

\begin{lemma}
 \label{lemma:oo-term}
 We have 
 \begin{align*}
\OO(\mbeta(\bld{u}_h,\muhat); (\bld u_h,\muhat),\,(\euu,\euuhat))  
\\
 -   \OO(\bld{u}; (\bld u,\bld{u}),\,(\euu, \euuhat))&\;\lesssim
\|\bld{u}\|_{\infty,\Omega}\,\Phi \,\vertiii{(\euu,\euuhat)}_{1,\Oh}.
 \end{align*}
 where 
 \[
  \Phi:= \vertiii{(\euu,\euuhat)}_{0,\Oh} + \vertiii{(\duu,\duuhat)}_{0,\Oh}
  + \vertiii{(\mbeta(\Piww\bld{u}, \Pimm\bld{u})-\bld u, 0)}_{0,\Oh}.
 \]
 \end{lemma}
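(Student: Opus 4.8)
The plan is to estimate the difference of the two convective forms by exploiting the linearity of $\OO$ in its middle (advected) slot together with the continuity bounds \eqref{oh-linf-1}--\eqref{oh-linf-2} of Lemma~\ref{lemma:OO}. Throughout, write $\bld{\beta}_h:=\mbeta(\bld u_h,\muhat)$ and recall from the definitions of the projections that $\bld u_h=\Piww\bld u-\euu$, $\muhat=\Pimm\bld u-\euuhat$, $\Piww\bld u-\bld u=-\duu$, and $\Pimm\bld u-\bld u=-\duuhat$. Since $\bld u=\bld 0$ on $\dO$, we also have $\euuhat\in\bld M_h(0)$.

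The first, and crucial, step is to peel off a dissipative diagonal term. Because $\OO(\bld{\beta}_h;\cdot,\cdot)$ is linear in its middle argument and $(\bld u_h,\muhat)=(\Piww\bld u,\Pimm\bld u)-(\euu,\euuhat)$, I would write
\[
\OO(\bld{\beta}_h;(\bld u_h,\muhat),(\euu,\euuhat))
=\OO(\bld{\beta}_h;(\Piww\bld u,\Pimm\bld u),(\euu,\euuhat))
-\OO(\bld{\beta}_h;(\euu,\euuhat),(\euu,\euuhat)).
\]
Since $(\bld u_h,\muhat)$ satisfies the weak incompressibility condition \eqref{ns-HDG-equations-3}, Proposition~\ref{lemma:div-proj} gives $\bld{\beta}_h\in H(\mathrm{div},\Omega)$ with $\divs\bld{\beta}_h=0$; hence, exactly as in the proof of Lemma~\ref{lemma:ns-energy}, the diagonal term equals $\tfrac12\bintEh{|\bld{\beta}_h\cdot\n|(\euu-\euuhat)}{\euu-\euuhat}\ge0$ and may be discarded to obtain the upper bound
\[
\OO(\bld{\beta}_h;(\bld u_h,\muhat),(\euu,\euuhat))-\OO(\bld u;(\bld u,\bld u),(\euu,\euuhat))\le S_1+S_2,
\]
with $S_1:=\OO(\bld{\beta}_h;(\Piww\bld u,\Pimm\bld u),(\euu,\euuhat))-\OO(\bld u;(\Piww\bld u,\Pimm\bld u),(\euu,\euuhat))$ and $S_2:=\OO(\bld u;(\Piww\bld u,\Pimm\bld u),(\euu,\euuhat))-\OO(\bld u;(\bld u,\bld u),(\euu,\euuhat))$.

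Then I would bound $S_1$ and $S_2$ with the continuity estimates. In $S_2$ only the advected argument changes, so linearity gives $S_2=\OO(\bld u;(-\duu,-\duuhat),(\euu,\euuhat))$, and \eqref{oh-linf-1}, applied with the smooth divergence-free velocity $\bld u$ as advecting field, yields $|S_2|\lesssim\|\bld u\|_{\infty,\Omega}\,\vertiii{(\duu,\duuhat)}_{0,\Oh}\,\vertiii{(\euu,\euuhat)}_{1,\Oh}$. In $S_1$ only the advecting field changes, from $\bld{\beta}_h$ to $\bld u$, so \eqref{oh-linf-2} yields $|S_1|\lesssim\vertiii{(\bld{\beta}_h-\bld u,0)}_{0,\Oh}\,\vertiii{(\Piww\bld u,\Pimm\bld u)}_{\infty,\Oh}\,\vertiii{(\euu,\euuhat)}_{1,\Oh}$, where $\vertiii{(\Piww\bld u,\Pimm\bld u)}_{\infty,\Oh}\lesssim\|\bld u\|_{\infty,\Omega}$ by the $L^\infty$-stability of $\Piww$ from Proposition~\ref{lemma:projection-b} and of the $L^2$-projection $\Pimm$. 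To finish, I would show $\vertiii{(\bld{\beta}_h-\bld u,0)}_{0,\Oh}\le C\,\Phi$: by linearity of $\mbeta$,
\[
\bld{\beta}_h-\bld u=-\,\mbeta(\euu,\euuhat)+\big(\mbeta(\Piww\bld u,\Pimm\bld u)-\bld u\big),
\]
and Proposition~\ref{lemma:div-proj} (with $\ell=0$, dominating the vanishing trace by $\ave{\mbeta(\euu,\euuhat)}$) gives $\vertiii{(\mbeta(\euu,\euuhat),0)}_{0,\Oh}\lesssim\vertiii{(\euu,\euuhat)}_{0,\Oh}$, so both pieces are among the three nonnegative summands of $\Phi$. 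Collecting the estimates for $S_1$ and $S_2$, together with $\vertiii{(\duu,\duuhat)}_{0,\Oh}\le\Phi$, gives the claimed inequality.

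The main obstacle is precisely the first step. A naive term-by-term splitting forces a factor $\|\bld{\beta}_h\|_{\infty,\Omega}$ (equivalently $\vertiii{(\bld u_h,\muhat)}_{\infty,\Oh}$) to appear, i.e.\ the $L^\infty$ size of the discrete solution, which cannot be controlled by the exact-solution norm $\|\bld u\|_{\infty,\Omega}$ demanded by the statement. The observation that unlocks the proof is that this unwanted contribution is exactly the nonnegative, upwind-dissipative diagonal term $\OO(\bld{\beta}_h;(\euu,\euuhat),(\euu,\euuhat))$; since only an upper bound is needed, it can simply be dropped, after which every remaining advecting and advected field is a projection of $\bld u$ and the estimate closes with $\|\bld u\|_{\infty,\Omega}$, as in the analysis of \cite{CesmeliogluCockburnQiu17}.
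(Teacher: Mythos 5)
Your proof is correct and follows essentially the same route as the paper's: a telescoping decomposition in which the nonnegative upwind-dissipative diagonal term $\OO(\mbeta(\bld u_h,\muhat);(\euu,\euuhat),(\euu,\euuhat))$ is dropped and the remaining differences are bounded via Lemma \ref{lemma:OO} together with Propositions \ref{lemma:div-proj} and \ref{lemma:projection-b}; your three-term split merely merges the paper's $T_2$ and $T_4$ by changing the advecting field from $\mbeta(\bld u_h,\muhat)$ to $\bld u$ in a single step and then decomposing $\vertiii{(\mbeta(\bld u_h,\muhat)-\bld u,0)}_{0,\Oh}$ afterwards. The only minor caveat is that your bound for $S_2$ invokes \eqref{oh-linf-1} with the advecting field $\bld u\notin\Vh^*$, slightly outside the stated hypotheses of Lemma \ref{lemma:OO}; the paper sidesteps this by using $\mbeta(\Piww\bld u,\Pimm\bld u)$ as the advecting field in that step, a substitution you could make at no cost since the resulting extra term is already one of the summands of $\Phi$.
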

 \ber{
 For a proof, see Appendix \ref{sec:C}; see also \cite[Section 6]{CesmeliogluCockburnQiu17}.}

\ber{We are now ready to prove Theorem \ref{thm:ns-error}. 
Since the following estimates holds}
\begin{alignat*}{2}
\|\eu\|_\Oh 
&\le  \vertiii{(\euu,\euuhat)}_{1,\Oh},&&\mbox{ by  \cite[Theorem 2.1]{DiPietroDroniouErn10}},
\\
\vertiii{(\euu,\euuhat)}_{1,\Oh} 
&\le C\, \nu^{-1}\, \mathsf{E}(\egg,\euu, \euuhat), 
&&\mbox{ by  Theorem \ref{thm:dh1-global}},
\\
h^{-1}\,\vertiii{(\euu,\euuhat)}_{\mbox{{\rm \tiny PF}},\Oh} 
&\le  \vertiii{(\euu,\euuhat)}_{1,\Oh},
\end{alignat*}
\ber{
the left hand side of the inequality in Theorem 5.4 is smaller than
\[C\nu^{-1}\mathsf{E}(\egg,\euu, \euuhat).\]
We turn to estimate the above term next using a standard energy argument.
}
\ber{To do that, we take  \[(\mg,\vv,q, \mwhat):=(\egg, \euu, \epp,\euuhat)\] in Lemma \ref{lemma:energy-ns}, to get} 
\begin{align*}
\mathsf{E}(\egg,\euu, \euuhat) =&\;  
\bintEh{\nu\,\dgg\,\n-\dpp\,\n}{\euu-\euuhat}  \\
&\; +   \OO(\mbeta(\bld{u}_h,\muhat); (\bld u_h,\muhat),\,(\euu-\euuhat))\\
&\; -   \OO(\bld{u}; (\bld u,\bld{u}),\,(\euu-\euuhat)).
\end{align*}
\ber{Then,  applying the \ber{Cauchy-Schwarz} inequality and using 
Lemma \ref{lemma:OO}, we obtain}
\begin{align*}
\mathsf{E}(\egg,\euu,\euuhat) \lesssim &\;  
  (\sum_{K\in\Oh} h_K\,\|\nu\,\dgg\,\n-\dpp\,\n\|_\dK^2)^{1/2}\,\vertiii{(\euu, \euuhat)}_{1,\Oh}\\
&\; +  \|\bld{u}\|_{\infty,\Omega}
\vertiii{(\duu,\duuhat)}_{0,\Oh}\,\vertiii{(\euu, \euuhat)}_{1,\Oh}\\
&\; +  \|\bld{u}\|_{\infty,\Omega}\,\vertiii{(\mbeta(\Piww \bld u, \Pimm \bld u))-\bld u,0)}_{0,\Oh}
\,\vertiii{(\euu, \euuhat)}_{1,\Oh}\\
&\; +  \|\bld{u}\|_{\infty,\Omega}\,\vertiii{(\euu, \euuhat)}_{0,\Oh}
\,\vertiii{(\euu, \euuhat)}_{1,\Oh}.
\end{align*}
Now, assuming $\nu^{-1} \|\bld{u}\|_{\infty,\Omega}$ sufficiently small such that 
\[
\|\bld{u}\|_{\infty,\Omega}\,\vertiii{(\euu, \euuhat)}_{0,\Oh}
\,\vertiii{(\euu, \euuhat)}_{1,\Oh}\le \frac{1}{2} \mathsf{E}(\egg,\euu,\euuhat), 
\]
we get
\begin{align*}
\mathsf{E}(\egg,\euu,\euuhat) \lesssim &\;  
   \sum_{K\in\Oh} h_K\,\|\nu\,\dgg\,\n-\dpp\,\n\|_\dK^2+  \|\bld{u}\|_{\infty,\Omega}^2\,\vertiii{(\duu,\duuhat)}_{0,\Oh}^2
\end{align*}
since we have that 
\[
 \vertiii{(\mbeta(\Piww \bld u, \Pimm \bld u))-\bld u,0)}_{0,K}\lesssim
 \vertiii{(\duu,\duuhat)}_{0,K},
\]
by the approximation properties of $\mbeta$, see  Proposition \ref{lemma:div-proj}.
This completes the proof of Theorem \ref{thm:ns-error}. 

\ber{
\subsection{Proofs of estimates \eqref{est-2} and \eqref{est-3} in Theorem 4.4}
The superconvergent velocity estimates in $L^2$-norm in \eqref{est-2} and \eqref{est-3}
follow from a standard duality argument. For a detailed proof,
we refer interested reader to \cite{CesmeliogluCockburnQiu17}.
}

\section{Concluding remarks}
\label{sec:conclude}
As we pointed out in \S 2.5, 
the application of our approach to the steady-state diffusion problem gives rise to the 
first superconvergent HDG method, namely,  the so-called SFH method proposed in 
\cite{CockburnDongGuzman08} when its non-zero stabilization is taken to be of order $1/h$. 
As shown in \cite{CockburnDongGuzman08}, the convergence properties of the SFH method remain 
unchanged when the stabilization function increases. A similar phenomenon takes place for all the 
methods considered here.

The extension of the techniques developed in this paper to other nonlinear partial differential equations constitutes the subject of ongoing work.

\

\ber{{\bf Acnowledgements}. The authors would like to thank the reviewers for their constructive comments leading to a better presentation of the material of this paper.}

 \appendix
 
\section{\ber{Proof of Proposition \ref{lemma:div-proj}}}
\label{sec:A}
\ber{Here we give a proof of Proposition \ref{lemma:div-proj} on the 
properties of the convective velocity $\mbeta(\bld u_h,\muhat)$.}

The well-posedness of the projection $\mbeta(\bld u_h,\muhat)\in \VV^*(K)$ is due to properties \eqref{div-space}
 on the space $\VV^*(K)$ since we have 
 $
  \gamma((\widetilde{\VV^*}(K))^\perp) = \MD
 $;  see \cite[Proposition 6.4]{CockburnFuSayas16}.
 Then, the first two estimates  directly follows from scaling and norm-equivalence on finite dimensional spaces.

Now, \ber{assume} $(\bld{u}_h,\muhat)$ satisfies \eqref{ns-HDG-equations-3} for all 
$q\in \Qh$. 
By equation \eqref{post-process-defn-2} and property \eqref{div-space-2} on the space $\VV^*(K)$, we immediately have 
$\mbeta(\bld{u}_h,\muhat)\in H(\mathrm{div};\Omega)$. 

\ber{Let us now prove that it is divergence-free. Obviously,} \eqref{ns-HDG-equations-3} is satisfied for the constant test function $q = 1$.
Hence, we have, on each element $K$,
\[
 -(\bld u_h,\grads q)_K + \bintK{\muhat\cdot\n}{q} = 0\quad \forall\; q\in Q(K)=\WWD.
\]
Next, by the definition of $\widetilde{\VV^*}(K)$, we have $\grads Q(K)= \grads \WWD \subset \widetilde{\VV^*}(K)$. 
Hence, using the definition of $\mbeta(\bld u_h,\muhat)$ in the above 
equation, and integrating by parts, we get
\[
(\divs \mbeta(\bld u_h,\muhat),q)_K=0 \quad \forall\; q\in \WWD.
\]
This implies $\divs \mbeta(\bld u_h,\muhat) = 0$ by \eqref{div-space-1}.
This concludes the proof of Proposition \ref{lemma:div-proj}.

\section{\ber{Proof of Proposition \ref{lemma:projection-b}}}
\label{sec:B}
\ber{Here, we give a proof of Proposition \ref{lemma:projection-b}
on the approximation properties of the projection $\Piww$.} By defintion of $\Piww \bld u$,  \eqref{bu-projection}, we have that, on
the element $K$, its  $i$-th component, $(\Piww \bld u)_i$, is defined as the element of $W^{\mbox{\rm \tiny D}}(K)$ such that
\begin{subequations}\begin{alignat}{2}{}
 \label{u-projection-1}
 ((\Piww \bld u)_i, w)_K = &\; (\bld u_i,w)_K &&\;\;\forall\; w\in W^{\mbox{\rm \tiny D}}(K),\\
 \label{u-projection-2}
 \bintK{(\Piww \bld u)_i}{\widehat{w}} = &\; \bintK{\bld u_i}{\widehat{w}} &&\;\;\forall\; \widehat{w}\in {M_S(\dK)}.
\end{alignat}
\end{subequations}
Thus, if we set $\Piw \bld u_i:=(\Piww \bld u)_i$, 
to prove our result, we only have to prove a similar result for the projection $\Piw$.

By \eqref{u-projection-1}, we have $\Piw u-P_{W}u \in \Wperp(K)$.
By \eqref{u-projection-2}, we have 
 \begin{align*}
  \bintK{\Piw u-P_{W} u}{\wwhat}
  =\bintK{ u-P_{W}u}{\wwhat} \quad\forall\;\wwhat\in M_S.
 \end{align*}
Hence, 
\begin{align*}
\|\Piw u-P_W u\|_{\dK}\le \,C_{M_S}
\|P_{M_S}( \Piw u-P_W u)\|_{\dK}
\le  {\color{red}C_{M_S}\,\|u-P_W u\|_{\dK}},
\end{align*}
since the constant $C_{M_S}$ exists by condition \eqref{condition-ms}.
Then, the first estimate follows directly by scaling and norm-equivalence of 
$\|w\|_K$ and $\|w\|_{\dK}$ for functions $w\in\Wperp(K)$.

Moreover, we have 
\begin{align*}
\|P_W u -u\|_K +h_K^{1/2}\|P_W u - u\|_{\dK}\le &\;
\|P_W u\|_K +\|u\|_K 
+ h_K^{1/2}\|P_W u\|_{\dK} +h_{K}^{1/2}\| u\|_{\dK}\\
\le &\;
C\, \|P_W u\|_K +\|u\|_K +h_{K}^{1/2}\| u\|_{\dK}\\
\le&\; C\, \|u\|_K +h_{K}^{1/2}\| u\|_{\dK}\\
\le &\;C\,h_K^{d/2}\|u\|_{\infty,K}.
\end{align*}
The second estimate is obtained by  
scaling, norm-equivalence of $h_K^{d/2}\|w\|_{\infty,K}$ and $\|w\|_{K}$ for the finite dimensional space $W(K)$, 
the above estimate and the first estimate \ber{of Proposition \ref{lemma:projection-b}.}
This completes the proof of \ber{Proposition} \ref{lemma:projection-b}.

\section{\ber{Proof of Lemma \ref{lemma:oo-term}}}
\label{sec:C}
\ber{Here, we prove Lemma \ref{lemma:oo-term} on the properties of the convective term $\OO$.}
The main idea is to first split the terms on the left hand side of the estimate in Lemma \ref{lemma:oo-term} 
into the sum of the following four terms 
 \begin{align*}
T_1:=&\;  \OO(\mbeta(\bld{u}_h,\muhat); (\bld u_h,\muhat),\,(\euu,\euuhat)) \\
 &\; -\OO(\mbeta(\bld{u}_h,\muhat); (\Piww \bld ,\Pimm \bld u),\,(\euu,\euuhat)),\nonumber\\
T_2:=&\; \OO(\mbeta(\bld{u}_h,\muhat); (\Piww \bld u,\Pimm \bld u),\,(\euu,\euuhat))\\
&\; -\OO(\mbeta(\Piww \bld u,\Pimm \bld u); (\Piww \bld u,\Pimm \bld u),\,(\euu,\euuhat)),\nonumber\\ 
 T_3:=&\; \OO(\mbeta(\Piww \bld u,\Pimm \bld u); (\Piww \bld u,\Pimm \bld u),\,(\euu,\euuhat))\\
&\; - \OO(\mbeta(\Piww \bld u,\Pimm \bld u); (\bld u, \bld u),\,(\euu,\euuhat)),\nonumber\\
 T_4:=&\; \OO(\mbeta(\Piww \bld u,\Pimm \bld u); (\Piww \bld u,\Pimm \bld u),\,(\euu,\euuhat))\\
&\; - \OO(\bld u; (\bld u, \bld u),\,(\euu,\euuhat)).\nonumber
\end{align*}
and then estimate each of them.

So, by \eqref{oh-linf-2}, we have that
$T_1 = - \OO(\mbeta(\bld{u}_h,\muhat); (\euu,\euuhat),\,(\euu,\euuhat))\le 0.$

For the second term, we have 
\begin{align*}
 T_2 =&\; - \OO(\mbeta(\euu,\euuhat); (\Piww \bld u,\Pimm \bld u),\,(\euu,\euuhat))\\
 \lesssim&\;
 \vertiii{(\mbeta(\euu,\euuhat),\ave{\mbeta(\euu,\euuhat)})}_{0,\Oh}
 \vertiii{(\Piww \bld u,\Pimm \bld u)}_{\infty,\Oh} 
 \vertiii{(\euu,\euuhat)}_{1,\Oh}\\ 
 \lesssim&\;
 \vertiii{(\euu,\euuhat)}_{0,\Oh}\,
  \|\bld{u}\|_{\infty,\Omega}\,
 \vertiii{(\euu,\euuhat)}_{1,\Oh},
\end{align*}
by Proposition  \ref{lemma:div-proj}, and Proposition \ref{lemma:projection-b}.
For the third term, we have, by \eqref{oh-linf-1},
\begin{align*}
 T_3 =&\; \OO(\mbeta(\Piww\bld u,\Pimm \bld u); (\duu,\duuhat),\,(\euu,\euuhat))\\
 \lesssim&\;
 \|\mbeta(\Piww\bld u,\Pimm \bld u)\|_{\infty,\Oh}
 \vertiii{(\duu,\duuhat)}_{0,\Oh} 
 \vertiii{(\euu,\euuhat)}_{1,\Oh}\\ 
 \lesssim&\;
 \|\bld{u}\|_{\infty,\Omega}\,
 \vertiii{(\duu,\duuhat)}_{0,\Oh}\,
 \vertiii{(\euu,\euuhat)}_{1,\Oh},
\end{align*}
by  Proposition \ref{lemma:div-proj}.
For the last term, we have 
\begin{align*}
 T_4 
 \lesssim&\;
 \|\left(\mbeta(\Piww \bld u,\Pimm \bld u)-\bld u, 0\right)\|_{0,\Oh}
\|\bld{u}\|_{\infty,\Omega}
 \vertiii{(\euu,\euuhat)}_{1,\Oh},
\end{align*}
by \eqref{oh-linf-2}.
This concludes the proof of Lemma \ref{lemma:oo-term}.


%
%

\end{document}